\numberwithin{equation}{section}
\newtheorem{theorem}{Theorem}[section]
\newtheorem{corollary}{Corollary}[section]
\newtheorem{definition}{Definition}[section]
\newtheorem{proposition}{Proposition}[section]
\newtheorem{remark}{Remark}[section]
\newtheorem{example}{Example}[section]
\newcommand{\M}{{\mathcal M}}
\newcommand{\N}{{\mathcal N}}
\newcommand{\8}{\infty}
\newcommand{\el}{\ell}
\newcommand{\be}{\begin{eqnarray*}}
\newcommand{\ee}{\end{eqnarray*}}
\newcommand{\beq}{\begin{equation}}
\newcommand{\eeq}{\end{equation}}
\newcommand{\beqn}{\begin{equation*}}
\newcommand{\eeqn}{\end{equation*}}
\newcommand{\bsp}{\begin{split}}
\newcommand{\esp}{\end{split}}
\numberwithin{equation}{section}
\begin{document}

\title{Noncommutative maximal inequalities associated with convex functions}

\thanks{{\it 2010 Mathematics Subject Classification:} 46L53, 46L51.}
\thanks{{\it Key words:} Noncommutative martingale, noncommutative maximal operator, convex function, maximal ergodic inequality, interpolation.}

\author{Turdebek N. Bekjan}

\address{College of Mathematics and Systems Science, Xinjiang
University, Urumqi 830046, China}

\email{bek@xju.edu.cn}


\author{Zeqian Chen}

\address{Wuhan Institute of Physics and Mathematics, Chinese
Academy of Sciences, West District 30, Xiao-Hong-Shan, Wuhan 430071, China}

\email{zqchen@wipm.ac.cn}


\author{Adam Os\c{e}kowski}

\address{Department of Mathematics, Informatics and Mechanics, University of Warsaw, Banacha 2, 02-097 Warsaw, Poland}

\email{ados@mimuw.edu.pl}


\date{}
\maketitle

\markboth{T. N. Bekjan, Z. Chen, and A. Os\c{e}kowski}%
{Maximal inequalities}

\begin{abstract}
We prove several noncommutative maximal inequalities associated with convex functions, including a Doob type inequality for a convex function of maximal operators on noncommutative martingales, noncommutative Dunford-Schwartz and Stein maximal ergodic inequalities for a convex function of positive and symmetric positive contractions. The key ingredient in our proofs is a Marcinkiewicz type interpolation theorem for a convex function of maximal operators in the noncommutative setting, which we establish in this paper. These generalize the results of Junge and Xu in the $L^p$ case to the case of convex functions.
\end{abstract}


\section{Introduction}\label{intro}

Noncommutative martingale theory has received considerable progress since the seminal paper by Pisier and Xu \cite{PX1997} in 1997, thanks to interactions with several fields of mathematics such as operator spaces (e.g. \cite{ER2000, Pisier2003}) and free probability (e.g. \cite{HP2000, VDN1992}). Many classical martingale and ergodic inequalities have been successfully transferred to the noncommutative setting (cf. e.g. \cite{Bek2008, BCPY2010, CXY2013, Junge2002, JLeMX2006, JX2003, JX2007, JX2008, LMX2010, LMX2011, PR2006, Perrin2009, Rand2002, Rand2007, RX2014}). These inequalities of quantum probabilistic nature have, in return, applications to operator spaces, quantum stochastic analysis and noncommutative harmonic analysis. We refer to \cite{Junge2005, JP2008, JX2010, PS2002, Xu2006} for some illustrations of applications to operator space theory.

We continue this line of investigation. The aim of this paper is to prove several noncommutative maximal inequalities associated with convex functions. But the study of maximal inequalities is one of the most delicate and difficult parts in the noncommutative setting. Maximal martingale and ergodic inequalities in noncommutative $L_p$-spaces have been established respectively by Junge \cite{Junge2002} and Junge and Xu \cite{JX2007}, with the use of the techniques developed for operator space theory and theory of interpolation of Banach spaces. Their method can be generalized to obtain the corresponding maximal inequalities on noncommutative symmetric spaces, including noncommutative Lorentz and Orlicz spaces (see e.g. \cite{Dirk2012a}). However, their argument, relying heavily on Banach space properties such as duality and homogeneousness of norms, cannot be used directly to establish the corresponding inequalities associated with convex functions. Indeed, Xu inspired us to generalize their maximal inequalities to the case of convex functions. To this end, we need to establish a Marcinkiewicz type interpolation theorem for a convex function of maximal operators in the noncommutative setting, which we will prove in this paper based on some ideas of \cite{BC2012}. In the meanwhile, our argument provides an alternative and simpler proof for the results of Junge and Xu mentioned above. Recently, Dirksen \cite{Dirk2012b} showed that our interpolation theorem (Theorem \ref{th:Inter}) holds true yet when the upper control index is finite (see Remark \ref{re:Inter} (2) below).

The paper is organized as follows. In Section \ref{pre}, we present some preliminaries and notations on noncommutative martingales and noncommutative maximal operators. In Section \ref{inter}, we will define a convex function of maximal operators in the noncommutative setting and present several basic properties. Then, a noncommutative Marcinkiewicz type interpolation theorem for a convex function of maximal operators is proved, which is the key ingredient for the proofs of our main results. In Section \ref{PrTh}, we prove the main results of this paper, including a Doob type inequality for a convex function of maximal operators noncommutative martingales, noncommutative Dunford-Schwartz and Stein maximal ergodic inequalities for a convex function of positive and symmetric positive contractions. As a consequence, we obtain the noncommutative Burkholder-Davis-Gundy inequality associated with a convex function. Finally, in Section \ref{Ex}, the results obtained in the previous sections are extended to cover weak type maximal inequalities associated with convex functions.

In what follows, $C$ always denotes a constant, which may be different in different places. For two nonnegative (possibly infinite) quantities $X$ and $Y$ by $X \lesssim Y$ we mean that there exists a constant $C>0$ such that $X \leq C Y,$ and by $X \approx Y$ that $X \lesssim Y$ and $Y \lesssim X.$

\section{Preliminaries}\label{pre}

\subsection{Noncommutative Orlicz spaces}

We use standard notions from theory of noncommutative $L_{p}$-spaces. Our main references are \cite{PX2003} and \cite{Xu2007} (see also \cite{PX2003} for more bibliography). Let $\mathcal{N}$ be a semifinite von Neumann algebra acting on a Hilbert space $\mathbb{H}$ with a normal semifinite faithful trace $\nu.$ Let $L_{0}(\mathcal{N})$ denote the topological $*$-algebra of measurable operators with respect to $(\mathcal{N}, \nu).$ The
topology of $L_{0}(\mathcal{N})$ is determined by the convergence in measure. The trace $\nu$ can be extended to the positive cone
$L_{0}^{+}(\mathcal{N})$ of $L_{0}(\mathcal{N}):$
\be
\nu(x)= \int_{0}^{\infty}\lambda\,d\nu(E_{\lambda}(x)),
\ee
where  $x=\int_{0}^{\infty}\lambda\,dE_{\lambda}(x)$ is the spectral decomposition of $x$. Given $0<p<\infty,$ let
\be
L_{p}(\mathcal{N})=\{x\in L_{0}(\mathcal{N}):\;
 \nu(|x|^{p})^{\frac{1}{p}}<\infty\}.
\ee
We define
\be
\|x\|_{p}= \nu(|x|^{p})^{\frac{1}{p}},\quad x\in
L_{p}(\mathcal{N}).
\ee
Then $(L_{p}(\mathcal{N}),\|.\|_{p})$ is a Banach (or quasi-Banach for $p<1$) space. This is the noncommutative $L_{p}$-space associated
with $(\mathcal{N},\nu)$, denoted by $L_{p}(\mathcal{N},\nu)$ or simply by $L_{p}(\mathcal{N}).$ As usual, we set $L_{\infty}(\mathcal{N},\nu)=\mathcal{N}$ equipped with the operator norm.

For $x\in L_{0}(\N)$ we define
\be
\lambda_{s}(x)=\tau(e^{\perp}_s (|x|))\;(s>0)\; \; \text{and}\; \; \mu_t (x) = \inf \{ s>0:\;
\lambda_s (x) \le t \}\; (t >0),
\ee
where $e_s^{\perp} (|x|) = e_{(s,\infty)}(|x|)$ is the spectral projection of $|x|$ associated with the interval $(s,\infty).$ The function $s\mapsto\lambda_{s}(x)$ is called the {\it distribution function} of $x$ and $\mu_{t}(x)$ is the {\it generalized singular number} of $x.$ We will denote simply by $\lambda (x)$ and  $\mu(x)$ the functions $s \mapsto \lambda_s (x)$ and $t \mapsto \mu_t (x),$ respectively. It is easy to check that both are decreasing and continuous from the right on $(0,\infty).$ For further information we refer the reader to \cite{FK1986}.

For $0<p<\infty,$ we have the Kolmogorov inequality
\beq\label{eq:kol}
\lambda_{s}(x)\leq\frac{\|x\|_{p}^{p}}{s^{p}},\quad \forall s >0,
\eeq
for any $x\in L_{p}(\N).$ If $x,y$ in $L_0(\N)$, then
\beq\label{eq:tri}
\lambda_{2s}(x+y)\leq \lambda_s(x)+\lambda_s (y),\quad \forall s >0.
\eeq

Let $\Phi$  be an Orlicz function on $[0,\infty),$ i.e., a continuous increasing and convex function satisfying $\Phi(0)=0$ and $\lim_{t\rightarrow
\infty}\Phi(t)=\infty.$ Recall that $\Phi$ is said to satisfy the $\triangle_2$-condition if there is a constant $C$ such that $\Phi(2t)\leq C\Phi(t)$ for all $t>0.$ In this case, we write $\Phi \in \Delta_2.$ It is easy to check that $\Phi \in \triangle_2$ if and only if for any $a > 0$ there is a constant $C_a>0$ such that $\Phi(a t)\leq C_a \Phi(t)$ for all $t>0.$

For any $x \in L_0 (\N),$ by means of functional calculus applied to the spectral decomposition of $|x|,$ we have
\beq\label{eq:Phispectralintegral}
\nu (\Phi (|x|)) = \int^{\8}_0 \lambda_s (|x|) d \Phi (s) = \int^{\8}_0 \Phi (\mu_t (x)) d t,
\eeq
(see e.g. \cite{FK1986}). Recall that for any $x,y \in L_0 (\N)$ there exist two partial isometries $u,v\in \mathcal{N}$ such that
\beq\label{eq:OperatorModuleTriangleInequa}
| x + y| \le u^* |x| u + v^* |y| v,
\eeq
(cf. \cite{AAP1982}). Then, we have
\be
\nu (\Phi (|\alpha x + (1- \alpha) y|)) \le \alpha \nu (\Phi (|x|)) + (1-\alpha) \nu (\Phi (|y|))
\ee
for any $0 \le \alpha \le 1$ and $x , y \in L_0 (\N).$ In addition, if $\Phi \in \triangle_2,$ then
\be
\nu (\Phi (| x + y|)) \le C_{\Phi} \big [ \nu (\Phi (|x|)) + \nu (\Phi (|y|)) \big ].
\ee
We will frequently use these two inequalities in what follows.

We will work with some standard indices associated to an Orlicz function. Given an Orlicz function $\Phi,$ let
\be M(t, \Phi)= \sup_{s >0} \frac{\Phi (t s)}{\Phi (s)},\quad t >0.
\ee
Define
\be
p_{\Phi} = \lim_{t \searrow 0} \frac{\log M(t, \Phi)}{\log t}, \quad q_{\Phi} = \lim_{t \nearrow \8} \frac{\log M(t, \Phi)}{\log t}.
\ee
Note the following properties:
\begin{enumerate}[{\rm (1)}]

\item $1 \le p_{\Phi} \le q_{\Phi} \le \8.$

\item The following characterizations of $p_{\Phi}$ and $q_{\Phi}$ hold
\be p_{\Phi} = \sup \Big \{ p >0:\; \int^t_0 s^{-p} \Phi (s) \frac{d s}{s} = O(t^{- p} \Phi (t)),\; \forall t >0 \Big \};\ee
\be q_{\Phi} = \inf \Big \{ q >0:\; \int^{\8}_t s^{-q} \Phi (s) \frac{d s}{s} = O(t^{- q} \Phi (t)),\; \forall t >0 \Big \}.\ee

\item $\Phi \in \triangle_2$ if and only if $q_{\Phi} < \8,$ or equivalently, $ \sup_{t>0} t \Phi'(t)/\Phi(t)< \8.$ ($\Phi' (t)$ is defined for each $t > 0$ except for a countable set of points in which we take $\Phi'(t)$ as the derivative from the right.)

\end{enumerate}
See \cite{M1985, M1989} for more information on Orlicz functions and Orlicz spaces.

For an Orlicz function $\Phi,$ the noncommutative Orlicz space $L_{\Phi}(\mathcal{N})$ is defined as the space of all measurable operators $x$ with
respect to $(\mathcal{N},\nu)$ such that
\be
\nu \Big ( \Phi \Big ( \frac{|x|}{c} \Big ) \Big )<\infty
\ee
for some $c>0.$ The space $L_{\Phi}(\mathcal{N}),$ equipped with the norm
\be
\|x\|_{\Phi}= \inf \big \{c>0: \;\nu \big ( \Phi({|x|}/{c}) \big )<1 \big \},
\ee
is a Banach space. If $\Phi(t)=t^p$ with $1 \leq p<\infty$ then $L_\Phi(\mathcal{N})= L_p(\mathcal{N}).$ Note that if $\Phi \in \triangle_2,$ then for $x \in L_0 (\N),$ $\nu (\Phi (|x|)) < \8$ if and only if $x \in L_{\Phi}({\mathcal{N}}).$ Noncommutative Orlicz spaces are symmetric spaces of measurable operators as defined in \cite{DDP1989, Xu1991}.

Let $a=(a_{n})$ be a finite sequence in $L_{\Phi}({\mathcal{N}}).$ We define
\be
\|a\|_{L_{\Phi}({\mathcal{N}},\el_{C}^{2})}= \Big \| \Big ( \sum_n
|a_{n}|^{2} \Big )^{\frac{1}{2}} \Big \|_{\Phi}\; \text{and}\;
\|a\|_{L_{\Phi}({\mathcal{N}},\el_{R}^{2})} = \Big \| \Big ( \sum_n |a_{n}^{*}|^{2} \Big )^{\frac{1}{2}} \Big \|_{\Phi},
\ee
respectively. This gives two norms on the family of all finite sequences in $L_{\Phi}({\mathcal{N}})$ (see \cite{BC2012} for details).
The corresponding completion $L_{\Phi}({\mathcal{N}},\el_{C}^{2})$ is a Banach space. Since $\Phi$ is a continuous increasing function, by \eqref{eq:Phispectralintegral} we have that a sequence $a=(a_{n})_{n\geq 0}$ in $L_{\Phi}({\mathcal{N}})$ belongs to $L_{\Phi}({\mathcal{N}}, \el_{C}^{2})$ if and only if
\be
\sup_{n\geq 0} \Big \| \Big ( \sum_{k= 0 }^{n}|a_{k}|^{2} \Big )^{\frac{1}{2}} \Big \|_{\Phi}<\infty.
\ee
If this is the case, $\big ( \sum_{k= 0 }^{\infty}|a_{k}|^{2} \big )^{\frac{1}{2}}$ can be appropriately defined as an element of $L_{\Phi}({\mathcal{N}}).$ Similarly, $\| \cdot \|_{L_{\Phi}({\mathcal{N}},\el_{R}^{2})}$ is also a norm on the family
of all finite sequence in $L_{\Phi}({\mathcal{N}}),$ and the corresponding completion $L_{\Phi}({\mathcal{N}}, \el_{R}^{2})$ is a Banach space, which is isometric to the row subspace of $L_{\Phi}({\mathcal{N}}\otimes {\mathcal{B}}(\el^{2}))$ consisting of matrices whose nonzero entries lie only in the first row. Observe
that the column and row subspaces of $L_{\Phi}({\mathcal{N}}\otimes {\mathcal{B}}(\el^{2}))$ are 1-complemented by Theorem 3.4 in \cite{DDP1992}.

In what follows, unless otherwise specified, we always denote by $\Phi$ an Orlicz function.

\subsection{Noncommutative martingales}

Let ${\mathcal{M}}$ be a finite von Neumann algebra with a normalized normal faithful trace $\tau.$ Let
$({\mathcal{M}}_{n})_{n\geq 0}$ be an increasing sequence of von Neumann subalgebras of ${\mathcal{M}}$ such that $\cup_{n\geq 0}
{\mathcal{M}}_{n}$ generates ${\mathcal{M}}$ (in the $w^{*}$-topology). $({\mathcal{M}}_{n})_{n\geq 0}$ is called a
filtration of ${\mathcal{M}}.$ The restriction of $\tau$ to ${\mathcal{M}}_{n}$ is still denoted by $\tau.$ Let
${\mathcal{E}}_{n}={\mathcal{E}}(\cdot|{\mathcal{M}}_{n})$ be the conditional expectation of ${\mathcal{M}}$ with respect to
${\mathcal{M}}_{n}.$ Then ${\mathcal{E}}_{n}$ is a norm 1 projection of $L_{\Phi}({\mathcal{M}})$ onto $L_{\Phi}({\mathcal{M}}_{n})$ (cf. \cite[Theorem 3.4]{DDP1992}) and ${\mathcal{E}}_{n}(x)\geq 0$ whenever $x\geq 0.$

A noncommutative $L_{\Phi}$-martingale with respect to $({\mathcal{M}}_{n})_{n\geq 0}$ is a sequence $x=(x_{n})_{n\geq 0}$
such that $x_{n} \in L_{\Phi}({\mathcal{M}}_{n})$ and
\be
{\mathcal{E}}_n(x_{n+1})=x_n
\ee
for any $n \ge 0.$ Let $\|x\|_{\Phi}=\sup_{n\geq 0}\|x_{n}\|_{\Phi}.$ If $\|x\|_{\Phi} <\infty,$ then $x$ is said to be a bounded $L_{\Phi}$-martingale.

\begin{remark}\label{re:SemifiniteMart}\rm

Let ${\mathcal{M}}$ be a semifinite von Neumann algebra with a semifinite normal faithful trace $\tau$. Let
$({\mathcal{M}}_{n})_{n\geq 0}$ be  a filtration of ${\mathcal{M}}$ such that the restriction of $\tau$ to each ${\mathcal{M}}_{n}$ is
still semifinite. Then we can define noncommutative martingales with respect to $({\mathcal{M}}_{n})_{n\geq 0}$. All results on
noncommutative martingales that will be presented below can be extended to this semifinite setting.

\end{remark}

Let $x$ be a noncommutative martingale. The martingale difference sequence of $x,$ denoted by $dx=(dx_{n})_{n\geq 0},$ is defined as
\be
dx_{0}=x_{0},\quad dx_{n}=x_{n}-x_{n-1},\quad n\geq 1.
\ee
Set
\be
S^C_n (x)= \Big ( \sum_{k= 0  }^{n}|dx_{k}|^{2} \Big )^{\frac{1}{2}} \quad \mbox{and}
\quad S^R_n (x)= \Big ( \sum_{k=0}^{n}|dx_{k}^{*}|^{2} \Big )^{\frac{1}{2}}.
\ee
By the preceding discussion, $dx$ belongs to $L_{\Phi}({\mathcal{M}},\el_{C}^{2})$ (resp. $L_{\Phi}({\mathcal{M}}, \el_{R}^{2}))$ if and only if $(S^C_n (x))_{n \geq 0}$
(resp. $(S^R_n (x))_{n\geq 0}$) is a bounded sequence in $L_{\Phi}({\mathcal{M}});$ in this case,
\be
S^C (x)= \Big ( \sum_{k= 0}^{\infty} | d x_{k} |^{2} \Big )^{\frac{1}{2}} \quad \mbox{and} \quad S^R (x)= \Big ( \sum_{k= 0 }^{\infty} | d x_{k}^* |^{2} \Big )^{\frac{1}{2}}
\ee
are elements in $L_{\Phi}({\mathcal{M}}).$ These are noncommutative analogues of the usual square functions in the commutative martingale theory. It should be pointed out that the two sequences $S^C_n (x)\: \mbox{and}\: S^R_n (x)$ may not be bounded in
$L_{\Phi}({\mathcal{M}})$ at the same time.

We define $\mathcal{H}^C_{\Phi}({\mathcal{M}})$ (resp. $\mathcal{H}^R_{\Phi}({\mathcal{M}})$) to be the space of all $L_{\Phi}$-martingales such that $dx \in L_{\Phi}({\mathcal{M}}, \el_{C}^{2})$ (resp. $dx \in L_{\Phi}({\mathcal{M}}, \el_{R}^{2})$ ), equipped with the norm
\be
\|x\|_{\mathcal{H}^C_{\Phi}({\mathcal{M}})}=\|dx\|_{L_{\Phi}({\mathcal{M}}, \el_{C}^{2})
} \quad \big ( \mbox{resp.} \;
\|x\|_{\mathcal{H}^R_{\Phi}({\mathcal{M}})}=\|dx\|_{L_{\Phi}({\mathcal{M}}, \el_{R}^{2}) } \big ).
\ee
$\mathcal{H}^C_{\Phi}({\mathcal{M}})$ and $\mathcal{H}^R_{\Phi}({\mathcal{M}})$ are Banach spaces. Note that if $x \in \mathcal{H}^C_{\Phi}({\mathcal{M}}),$
\be
\|x\|_{\mathcal{H}^C_{\Phi}({\mathcal{M}})} = \sup_{n \geq 0}\|S^C_n (x)\|_{L_{\Phi}({\mathcal{M}})} = \|S^C (x)\|_{L_{\Phi}({\mathcal{M }})}.
\ee
Similar equalities hold for $\mathcal{H}^R_{\Phi}({\mathcal{M}}).$

Now, we define the Orlicz-Hardy spaces of noncommutative martingales as follows: If $q_{\Phi} < 2,$ then
\be
\mathcal{H}_{\Phi}({\mathcal{M}}) = \mathcal{H}^C_{\Phi}({\mathcal{M}}) + \mathcal{H}^R_{\Phi}({\mathcal{M}}),
\ee
equipped with the norm
\be
\|x\| = \inf \left \{ \|y\|_{ \mathcal{H}^C_{\Phi}({\mathcal{M}})} + \|z\|_{\mathcal{H}^R_{\Phi}({\mathcal{M}})}:\;
x=y+z,\; y \in \mathcal{H}^C_{\Phi}({\mathcal{M}}),\; z \in \mathcal{H}^R_{\Phi} ({\mathcal{M}}) \right \}.
\ee
If $2\leq p_{\Phi},$
\be
\mathcal{H}_{\Phi}({\mathcal{M}}) = \mathcal{H}^C_{\Phi} ({\mathcal{M}}) \cap \mathcal{H}^R_{\Phi}({\mathcal{M}}),
\ee
equipped with the norm
\be
\|x\| = \max \left \{ \|x\|_{\mathcal{H}^C_{\Phi}({\mathcal{M}})},\; \|x\|_{\mathcal{H}^R_{\Phi}({\mathcal{M}})} \right \}.
\ee

We refer to \cite{BC2012} for more information on $ \mathcal{H}_{\Phi}({\mathcal{M}}).$

\subsection{The space $L_p (\M; \el^{\8})$}

Given $1 \le p < \8,$ recall that $L_p(\M; \el^{\8})$ is defined as the space of all sequences $(x_n)_{n \ge 1}$ in $L_p (\M)$ for which there exist $a, b \in L_{2p}(\M)$ and a bounded sequence $(y_n)_{n \ge 1}$ in $\M$ such that $x_n = a y_n b$ for all $n \ge 1.$ For such a sequence, set
\beq\label{eq:p-MaxNorm}
\left \| ( x_n )_{n \ge 1} \right \|_{L_p (\M, \el^{\8})} : = \inf \big \{ \| a \|_{2 p} \sup_n\| y_n \|_{\8} \| b \|_{2 p} \big \},
\eeq
where the infimum runs over all possible factorizations of $(x_n)_{n \ge 1}$ as above. This is a norm and $L_p (\M; \el^{\8})$ is a Banach space. These spaces were first introduced by Pisier \cite{Pisier1998} in the case when $\M$ is hyperfinite and by Junge \cite{Junge2002} in the general case, and studied extensively by Junge and Xu \cite{JX2007}.

As in \cite{JX2007}, we usually write
\be
\big \| {\sup_n}^+ x_n \big \|_p = \| ( x_n )_{n \ge 1} \|_{L_p (\M, \el^{\8})}.
\ee
We warn the reader that this suggestive notation should be treated with care. It is used for
possibly nonpositive operators and
\be
\big \| {\sup_n}^+ x_n \big \|_p \neq \big \| {\sup_n}^+ | x_n | \big \|_p
\ee
in general. However it has an intuitive description in the positive case, as observed in \cite[p.329]{JX2007}: A positive sequence
$(x_n)_{n \ge1}$ of $L_p (\M )$ belongs to $L_p (\M; \el^{\8})$ if and only if there exists a positive $a \in L_p (\M)$ such
that $x_n \le a$ for any $n \ge 1$ and in this case,
\beq\label{eq:PositiveSequence}
\big \| {\sup_n}^+ x_n \big \|_p = \inf \big \{\| a \|_p :\; a \in L_p (\M),\; x_n \le a, \; \forall n \ge 1 \big \}.
\eeq
In particular, it was proved in \cite{JX2007} that the spaces $L_p (\M; \el^{\8})$ for all $1 \le p \le \8$ form interpolation scales with respect to complex interpolation. However, this result is no longer true for the real interpolation. This is one of the
difficulties one will encounter for dealing with Marcinkiewicz type interpolation theorem on maximal operators in the noncommutative setting.

\section{Interpolation}\label{inter}

In this section, we will establish a noncommutative Marcinkiewicz type interpolation theorem for a convex function of maximal operators, which plays a crucial role in the proofs of our main results in the next section.

To this end, we introduce the following definition.

\begin{definition}\label{df:QuasiOperator}
Let $1 \le p_0 < p_1 \le \8.$ Let $S = (S_n)_{n \ge 1}$ be a sequence of maps from $L^+_{p_0}(\mathcal{M}) + L^+_{p_1}(\mathcal{M}) \mapsto L^+_{0}(\mathcal{M}).$
\begin{enumerate}[\rm (1)]

\item $S$ is said to be subadditive, if for any $n \ge 1,$
\be
S_n (x + y) \le S_n (x) + S_n (y),\quad \forall x, y \in L^+_{p_0}(\mathcal{M}) + L^+_{p_1}(\mathcal{M}).
\ee

\item $S$ is said to be of weak type $(p, p)$ ($p_0 \le p < p_1$) if there is a positive constant $C$ such that for any $x \in L^+_p (\M)$ and any $\lambda >0$ there exists a projection $e \in \M$ such that
\be
\tau (e^{\perp} ) \le \left ( \frac{C \| x \|_p}{\lambda} \right )^p \quad \text{and} \quad e S_n (x) e \le \lambda,\; \forall n \ge 1.
\ee

\item $S$ is said to be of type $(p, p)$ ($p_0 \le p \le p_1$) if there is a positive constant $C$ such that for any $x \in L^+_p (\M)$ there exists $a \in L^+_p (\M)$ satisfying
\be
\| a \|_p \le C \| x \|_p \quad \text{and} \quad S_n (x) \le a,\; \forall n \ge 1.
\ee
In other words, $S$ is of type $(p, p)$ if and only if $\| S(x) \|_{L_p (\M; \el^{\8})} \le C \| x \|_p$ for all $x \in L^+_p (\M).$

\end{enumerate}
\end{definition}

This definition of subadditive operators in the noncommutative setting is due to Junge and Xu \cite{JX2007}, who proved a noncommutative analogue of the classical Marcinkiewicz interpolation theorem as follows.

\begin{theorem}\label{th:InterJX2007}{\rm (cf. \cite[Theorem 3.1]{JX2007})}\;
Let $1 \le p_0 < p_1 \le \8.$ Let $S = (S_n)_{n \ge 1}$ be a sequence of maps from $L^+_{p_0}(\mathcal{M}) + L^+_{p_1}(\mathcal{M}) \mapsto L^+_{0}(\mathcal{M}).$ Assume that $S$ is subadditive. If $S$ is of weak type $(p_0, p_0)$ with constant $C_0$ and of type $(p_1, p_1)$ with constant $C_1,$ then for any $p_0 < p < p_1,$ $S$ is of type $(p, p)$ with constant $C_p$ satisfying
\be
C_p \le C C^{1- \theta}_0 C^{\theta}_1 \left ( \frac{1}{p_0} - \frac{1}{p} \right )^{- 2}
\ee
where $\theta$ is determined by $1 /p = (1- \theta) /p_0 + \theta /p_1$ and $C$ is an absolute constant.
\end{theorem}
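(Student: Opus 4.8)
The plan is to run the classical Marcinkiewicz argument — a dyadic Calder\'on--Zygmund decomposition followed by a layer-cake summation — but adapted to the noncommutative setting, where the essential new point is that the distributional estimate of the commutative proof must be replaced by the explicit construction of a single positive operator $a$ dominating the whole sequence $(S_n(x))_{n\ge 1}$: indeed, by \eqref{eq:PositiveSequence}, type $(p,p)$ is precisely the assertion that such an $a$ exists with $\|a\|_p\lesssim\|x\|_p$. So fix $x\in L_p^+(\M)$. For each $k\in\ent$ let $e^{(k)}=e_{(2^k,\infty)}(x)$ be the spectral projection of $x$, and split
\[
x=y_k+z_k,\qquad y_k:=xe^{(k)}\in L_{p_0}^+(\M),\qquad z_k:=x(\un-e^{(k)})\in L_{p_1}^+(\M),
\]
so that $\|z_k\|_{\8}\le 2^k$ and, by functional calculus, $\|z_k\|_{p_1}^{p_1}\le 2^{k(p_1-p)}\|x\|_p^p$ and $\|y_k\|_{p_0}^{p_0}\le 2^{k(p_0-p)}\|x\|_p^p$; one should in fact retain the sharper layer-cake identities expressing these two quantities as integrals of $s\mapsto\lambda_s(x)$ over $(0,2^k]$, resp.\ $(2^k,\8)$. (If $p_1=\8$ the ``small'' piece $z_k$ is simply bounded by $2^k$, which leads to an easier variant of what follows.)

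Now apply the two hypotheses at each scale. By type $(p_1,p_1)$ there is $a_k\in L_{p_1}^+(\M)$ with $S_n(z_k)\le a_k$ for all $n$ and $\|a_k\|_{p_1}\le C_1\|z_k\|_{p_1}$; put $f_k:=e_{(0,2^k]}(a_k)$, so $f_ka_kf_k\le 2^k\un$ and, by Kolmogorov's inequality \eqref{eq:kol}, $\tau(f_k^\perp)\le 2^{-kp_1}\|a_k\|_{p_1}^{p_1}\le C_1^{p_1}2^{-kp_1}\|z_k\|_{p_1}^{p_1}$. By weak type $(p_0,p_0)$ applied to $y_k$ at level $2^k$ there is a projection $q_k$ with $q_kS_n(y_k)q_k\le 2^k\un$ for all $n$ and $\tau(q_k^\perp)\le C_0^{p_0}2^{-kp_0}\|y_k\|_{p_0}^{p_0}$. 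Let $\tilde q_k:=q_k\wedge f_k$. Since $\tilde q_k\le q_k$ and $\tilde q_k\le f_k$, subadditivity together with $S_n(z_k)\le a_k$ gives
\[
\tilde q_kS_n(x)\tilde q_k\le \tilde q_kS_n(y_k)\tilde q_k+\tilde q_ka_k\tilde q_k\le 2^k\un+2^k\un=2^{k+1}\un\qquad(\forall\,n),
\]
while $\tau(\tilde q_k^\perp)\le\tau(q_k^\perp)+\tau(f_k^\perp)\le C_0^{p_0}2^{-kp_0}\|y_k\|_{p_0}^{p_0}+C_1^{p_1}2^{-kp_1}\|z_k\|_{p_1}^{p_1}$. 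Substituting the layer-cake identities and summing by Fubini, the decisive weighted-trace series converges:
\[
\sum_{k\in\ent}2^{kp}\,\tau(\tilde q_k^\perp)\ \lesssim\ \Big(\frac{C_0^{p_0}}{1-2^{-(p-p_0)}}+\frac{C_1^{p_1}}{1-2^{-(p_1-p)}}\Big)\|x\|_p^p .
\]
Replacing each $\tilde q_k$ by $\bigwedge_{j\ge k}\tilde q_j$ (which multiplies the right-hand side only by a harmless constant) we may assume $(\tilde q_k)_{k}$ is increasing with $\tilde q_k\uparrow\un$.

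The remaining step — the reconstruction — is where the genuine difficulty lies: from the increasing family $(\tilde q_k)$ with $\tilde q_kS_n(x)\tilde q_k\le 2^{k+1}\un$ for all $n,k$, we must produce one $a\in L_p^+(\M)$ with $S_n(x)\le a$ for every $n$ and $\|a\|_p^p\lesssim_p\sum_k2^{kp}\tau(\tilde q_k^\perp)$. This is exactly where noncommutativity bites: the naive candidate $a=\sum_k 2^{k+2}(\tilde q_k-\tilde q_{k-1})$ does \emph{not} work, because a positive operator need not be dominated by a multiple of its ``block-diagonal part'' relative to $\un=\sum_k(\tilde q_k-\tilde q_{k-1})$ (the off-diagonal corners, while norm-controlled block by block, add up badly), and this is precisely why — as recalled in Section~\ref{pre} — one cannot simply invoke real interpolation of the scale $L_p(\M;\el^{\8})$. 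Instead one must iterate and absorb the off-diagonal corners: starting from
\[
S_n(x)\le 2\,\tilde q_kS_n(x)\tilde q_k+2\,\tilde q_k^\perp S_n(x)\tilde q_k^\perp\le 2^{k+2}\un+2\,\tilde q_k^\perp S_n(x)\tilde q_k^\perp
\]
and, using $\tilde q_k^\perp=(\tilde q_{k+1}-\tilde q_k)+\tilde q_{k+1}^\perp$, the descent $\tilde q_k^\perp S_n(x)\tilde q_k^\perp\le 2^{k+3}\un+2\,\tilde q_{k+1}^\perp S_n(x)\tilde q_{k+1}^\perp$, one has to build $a$ by carefully balancing the geometric growth of the coefficients against the decay of $\tau(\tilde q_k^\perp)$. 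I expect this reconstruction lemma to be the technical heart of the proof; it is the noncommutative substitute for the classical layer-cake integration, and presumably where the \emph{second} factor of $(1/p_0-1/p)^{-1}$ in the final constant is created.

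Finally comes the bookkeeping of constants. Inserting the weighted-trace bound into $\|a\|_p$ and then optimizing the position of the dyadic grid — replacing the thresholds $2^k$ by $\delta\,2^k$ and choosing $\delta$ to balance the two summands — converts $C_0^{p_0}$ and $C_1^{p_1}$ into the geometric mean $C_0^{1-\theta}C_1^{\theta}$ with $\theta$ determined by $1/p=(1-\theta)/p_0+\theta/p_1$; using $1-2^{-(p-p_0)}\asymp p-p_0\asymp p\,p_0\,(1/p_0-1/p)$ together with the additional loss coming from the reconstruction step then yields
\[
\|S(x)\|_{L_p(\M;\el^{\8})}\ \le\ C\,C_0^{1-\theta}C_1^{\theta}\Big(\frac1{p_0}-\frac1p\Big)^{-2}\|x\|_p
\]
for all $x\in L_p^+(\M)$, with $C$ an absolute constant, which is exactly the claimed statement that $S$ is of type $(p,p)$ with constant $C_p$ as asserted.
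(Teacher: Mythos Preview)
The paper does not give its own proof of this theorem: it is merely quoted from \cite{JX2007}. So there is no ``paper's proof'' to compare against directly. That said, the reconstruction device you are missing is on display in the paper's proof of Theorem~\ref{th:Inter}, and it is \emph{not} the iterative absorption you suggest.

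Your setup is correct: the dyadic splitting $x=y_k+z_k$, the combination of the weak-$(p_0,p_0)$ and strong-$(p_1,p_1)$ hypotheses into a single increasing family of projections $(\tilde q_k)$ with $\tilde q_k S_n(x)\tilde q_k\le 2^{k+1}$ and $\sum_k 2^{kp}\tau(\tilde q_k^\perp)\lesssim\|x\|_p^p$, are exactly as in Junge--Xu. The genuine gap is the reconstruction step, which you explicitly leave open; moreover, the iterative scheme you propose does not work. Writing $p_k=\tilde q_k-\tilde q_{k-1}$, your recursion $\tilde q_k^\perp S_n(x)\tilde q_k^\perp\le 2^{k+3}p_{k+1}+2\,\tilde q_{k+1}^\perp S_n(x)\tilde q_{k+1}^\perp$ is valid, but iterating it from level $k_0$ yields a majorant of the shape $a\asymp\sum_{j\ge 0}4^{j}\,2^{k_0}\,p_{k_0+j}$: each step of the ``$T\le 2eTe+2e^\perp Te^\perp$'' trick costs a factor $2$, and this matches the dyadic growth $2^{k}$, so the coefficient on $p_{k_0+j}$ is $\asymp 4^{j}2^{k_0}$ rather than $\asymp 2^{k_0+j}$. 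Since the trace bound only guarantees $\tau(p_{k_0+j})\lesssim 2^{-(k_0+j)p}$, one gets $\|a\|_p^p\gtrsim\sum_j 2^{jp}$, which diverges. The off-diagonal corners cannot be absorbed one scale at a time.

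The device that actually works handles all scales simultaneously via a weighted Cauchy--Schwarz on the quadratic form. With $p_k=\tilde q_k-\tilde q_{k-1}$ one estimates, for $\xi\in\mathbb H$,
\[
(q_N S_n(x) q_N\xi,\xi)=\sum_{k,m\le N}(p_kS_n(x)p_m\xi,\xi)\le\Big(\sum_{k\le N}\|p_kS_n(x)p_k\|^{1/2}\|p_k\xi\|\Big)^2\le\Big(\sum_{k\le N}\frac{2^k}{\alpha_k}\Big)\sum_{k\le N}\alpha_k\|p_k\xi\|^2,
\]
using $\|p_kS_n(x)p_m\|\le\|p_kS_n(x)p_k\|^{1/2}\|p_mS_n(x)p_m\|^{1/2}$ and then Cauchy--Schwarz with weights $(\alpha_k)$. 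This gives the majorant $a=2C_\infty\|x\|_\infty(1-q_N)+2\big(\sum_{k\le N}2^k/\alpha_k\big)\sum_{k\le N}\alpha_k p_k$; choosing $\alpha_k$ growing like $2^{k\gamma}$ for some $\gamma\in(1,p/p_0)$ makes both $\sum 2^k/\alpha_k$ and $\|a\|_p$ finite and delivers the stated constant. This is precisely the construction carried out (in its $\Phi$-moment form) in the proof of Theorem~\ref{th:Inter} in the paper, and it is the missing ingredient in your argument.
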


To state our results, we need to define a convex function of maximal operators in the noncommutative setting as follows.

\begin{definition}\label{df:PhiMaxOper}
Let $\Phi$ be an Orlicz function. Let $(x_n)$ be a sequence in $L_{\Phi} (\M).$ We define $\tau [ \Phi ( {\sup_n}^+ x_n ) ]$ by
\beq\label{eq:PhiMaxSequ}
\tau \Big [ \Phi \big ( {\sup_n}^+ x_n \big ) \Big ] : = \inf \left \{ \frac{1}{2} \Big ( \tau \big [ \Phi \big ( |a|^2 \big ) \big ] + \tau \big [ \Phi \big ( |b|^2 \big ) \big ]\Big ) \sup_n \| y_n \|_{\8} \right \}
\eeq
where the infimum is taken over all decompositions $x_n = a y_n b$ for $a, b \in L_0 (\M)$ and $(y_n) \subset L_{\8} (\M)$ with $|a|^2, |b|^2 \in L_{\Phi} (\M),$ and $\| y_n \|_{\8} \le 1$ for all $n.$
\end{definition}

To understand $\tau \big [ \Phi \big ( {\sup_n}^+ x_n \big ) \big ],$ let us consider a positive sequence $x = (x_n)$ in $L_{\Phi} (\M).$ We then note that
\beq\label{eq:PhiMaxSequPosiGe}
\tau \Big [ \Phi \big ( {\sup_n}^+ x_n \big ) \Big ] \le \tau \big [ \Phi \big ( a \big ) \big ],
\eeq
if $a \in L_{\Phi}^+ (\M)$ such that $x_n \le a$ for all $n.$ Indeed, for every $n$ there exists a contraction $u_n$ such that $x_n^{\frac{1}{2}} = u_n a^{\frac{1}{2}}$ and hence $x_n = a^{\frac{1}{2}} u^*_n u_n a^{\frac{1}{2}}.$ This concludes \eqref{eq:PhiMaxSequPosiGe}. Moreover, the converse to \eqref{eq:PhiMaxSequPosiGe} also holds true provided $\Phi \in \triangle_2.$

\begin{proposition}\label{prop:BasicPhiMax}
Let $\Phi$ be an Orlicz function satisfying the $\triangle_2$-condition.
\begin{enumerate}[\rm (1)]

\item If $x = (x_n)$ is a positive sequence in $L_{\Phi} (\M),$ then
\be
\tau \Big [ \Phi \big ( {\sup_n}^+ x_n \big ) \Big ] \approx \inf \Big \{ \tau \big [ \Phi \big ( a \big ) \big ]:\; a \in L_{\Phi}^+ (\M)\; \text{such that}\; x_n \le a, \forall n \ge 1 \Big \}.
\ee

\item For any two sequences $x =(x_n), y = (y_n)$ in $L_{\Phi} (\M)$ one has
\be
\tau \Big [ \Phi \big ( {\sup_n}^+ (x_n + y_n) \big ) \Big ] \lesssim \tau \Big [ \Phi \big ( {\sup_n}^+ x_n \big ) \Big ] + \tau \Big [ \Phi \big ( {\sup_n}^+ y_n \big ) \Big ].
\ee


\end{enumerate}
\end{proposition}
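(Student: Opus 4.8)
The plan is to prove the two parts separately, using the definition \eqref{eq:PhiMaxSequ} together with the operator monotone/convexity machinery introduced in the preliminaries (in particular \eqref{eq:OperatorModuleTriangleInequa}, the convexity of $x \mapsto \nu(\Phi(|x|))$, and the $\triangle_2$-condition).

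For part (1), the inequality $\tau[\Phi({\sup_n}^+ x_n)] \le \inf\{\tau[\Phi(a)] : x_n \le a\}$ is exactly \eqref{eq:PhiMaxSequPosiGe}, which is already established in the text via the polar-type factorization $x_n = a^{1/2} u_n^* u_n a^{1/2}$ with $\|u_n^* u_n\|_\infty \le 1$ and $|a^{1/2}|^2 = a \in L_\Phi(\M)$. The substance is the reverse inequality, valid under $\triangle_2$. First I would fix a decomposition $x_n = a y_n b$ with $\|y_n\|_\infty \le 1$, $|a|^2, |b|^2 \in L_\Phi(\M)$, and set $c := \tfrac12(|a|^2 + |b|^2) \in L_\Phi^+(\M)$. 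Since $x_n \ge 0$, we may symmetrize: replacing $y_n$ by its "real part" relative to the polar structure costs nothing, and using $2|a y_n b| \le |a^* a| + \|y_n\|_\infty^2\, |b^* b|$ type estimates (a consequence of the operator Cauchy–Schwarz inequality $a y_n b + b^* y_n^* a^* \le a a^* + b^* b$ when $\|y_n\|\le 1$, applied after noting $x_n = x_n^*$ so $x_n = \tfrac12(a y_n b + b^* y_n^* a^*)$) one obtains $x_n \le \tfrac12(a a^* + b^* b)$. Hence $x_n \le a_0$ for the single positive operator $a_0 := \tfrac12(|a^*|^2 + |b|^2)$, and by convexity of $t \mapsto \Phi(t)$ applied to $|a^*|^2$ and $|b|^2$ (together with the fact, noted in the preliminaries, that $\nu(\Phi(|u^*|^2))$ and $\nu(\Phi(|u|^2))$ need separate handling — here $\triangle_2$ lets us absorb the factor of $2$), one gets $\tau[\Phi(a_0)] \le C_\Phi(\tau[\Phi(|a|^2)] + \tau[\Phi(|b|^2)])/2$ up to the $\triangle_2$-constant. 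Taking the infimum over factorizations gives $\inf\{\tau[\Phi(a)] : x_n \le a\} \lesssim \tau[\Phi({\sup_n}^+ x_n)]$, completing the equivalence. The one technical wrinkle is the row/column asymmetry ($|a|$ versus $|a^*|$); I would handle it using that $\mu_t(a^*) = \mu_t(a)$, so $\nu(\Phi(|a^*|^2)) = \int_0^\infty \Phi(\mu_t(a)^2)\,dt = \nu(\Phi(|a|^2))$ — the two moment quantities coincide, which removes the difficulty entirely.

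For part (2), I would work directly from \eqref{eq:PhiMaxSequ}. Fix $\e > 0$ and choose near-optimal decompositions $x_n = a\, u_n\, b$ and $y_n = c\, v_n\, d$ with $\|u_n\|_\infty, \|v_n\|_\infty \le 1$, such that $\tfrac12(\tau[\Phi(|a|^2)] + \tau[\Phi(|b|^2)]) \le \tau[\Phi({\sup_n}^+ x_n)] + \e$ and similarly for the $y$-decomposition. Then write the sum as a single factorization through a $2\times 1$ / $1\times 2$ block: $x_n + y_n = A\, W_n\, B$ where $A = (a \ \ c)$ is a row (an element of $L_0(\M \otimes \M_{1,2})$), $B = \binom{b}{d}$ is a column, and $W_n = \mathrm{diag}(u_n, v_n)$ has $\|W_n\|_\infty = \max(\|u_n\|_\infty, \|v_n\|_\infty) \le 1$. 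One computes $|A|^2 = $ a $2\times 2$ matrix with diagonal $|a|^2, |c|^2$, and — crucially — $\tau_{\M \otimes \M_2}[\Phi(|A|^2)]$ is controlled by $\tau[\Phi(|a|^2)] + \tau[\Phi(|c|^2)]$ up to an absolute constant and the $\triangle_2$-constant (this is where the $1$-complementedness of row/column subspaces and Theorem 3.4 of \cite{DDP1992}, cited in the preliminaries, enter, or alternatively a direct $2\times 2$ matrix estimate via $|A|^2 \le \mathrm{diag}(|a|^2,|c|^2) + \text{off-diagonal} \le 2\,\mathrm{diag}(|a|^2,|c|^2)$ in the appropriate sense). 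The same for $|B|^2$ with $|b|^2, |d|^2$. Feeding this into \eqref{eq:PhiMaxSequ} for the sequence $(x_n + y_n)$ and letting $\e \to 0$ yields the claimed $\lesssim$.

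The main obstacle I anticipate is the $2\times 2$ matrix bound $\tau_{\M\otimes\M_2}[\Phi(|A|^2)] \lesssim \tau[\Phi(|a|^2)] + \tau[\Phi(|c|^2)]$ in part (2), together with its analogue in part (1) for the asymmetry between $|a|$ and $|a^*|$. For $\Phi(t) = t^p$ this is classical and sharp; for general $\Phi \in \triangle_2$ one must check that applying $\Phi$ to the block operator does not blow up the trace. The cleanest route is: by \eqref{eq:OperatorModuleTriangleInequa} applied entrywise, $|A|^2 = |a|^2 + |c|^2$ as operators acting appropriately (viewing $A$ as a $1 \times 2$ row, $A A^* = a a^* + c c^*$ on $\M$ while $A^* A$ is the $2\times 2$ block $\bigl(\begin{smallmatrix} a^*a & a^*c \\ c^*a & c^*c \end{smallmatrix}\bigr)$); one then uses that $\tau_{\M\otimes\M_2}[\Phi(A^*A)] = \tau_\M[\Phi(AA^*)] + (\text{rank-deficiency correction})$ — more precisely $\mu_t(A^*A) = \mu_t(AA^*)$ since $A^*A$ and $AA^*$ have the same nonzero spectrum with multiplicity, and $\Phi(0) = 0$, so the moment quantities are literally equal. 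Thus $\tau_{\M\otimes\M_2}[\Phi(|A|^2)] = \tau_\M[\Phi(aa^* + cc^*)] \le \tau_\M[\Phi(2\max(aa^*,cc^*))]$, hmm — here one invokes the convexity inequality $\nu(\Phi(\alpha u + (1-\alpha)v)) \le \alpha\nu(\Phi(u)) + (1-\alpha)\nu(\Phi(v))$ from the preliminaries with $\alpha = 1/2$, $u = 2aa^*$, $v = 2cc^*$, to get $\le \tfrac12(\nu(\Phi(2aa^*)) + \nu(\Phi(2cc^*))) \le \tfrac12 C_\Phi(\nu(\Phi(aa^*)) + \nu(\Phi(cc^*)))$, and finally $\nu(\Phi(aa^*)) = \nu(\Phi(|a|^2))$ by the singular-value identity. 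Once this lemma-level estimate is in hand, both parts fall out by taking infima, and the absolute constants are tracked to produce the $\lesssim$ (not $\le$) in the statement.
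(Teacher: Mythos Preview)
Your proof of part~(1) is correct and proceeds by a genuinely different route from the paper. The paper first reduces to $a,b\ge 0$ (via polar decomposition), sets $c=(a^2+b^2)^{1/2}$, and uses the factorizations $a=cu$, $b=vc$ through partial isometries to rewrite $x_n=c(uy_nv)c$, whence $x_n\le c^2$. You instead exploit self-adjointness to write $x_n=\tfrac12(ay_nb+b^*y_n^*a^*)$ and apply the operator Cauchy--Schwarz inequality to get $x_n\le\tfrac12(aa^*+b^*b)$ directly, then invoke $\mu_t(a)=\mu_t(a^*)$ to remove the row/column asymmetry. Both arguments produce a majorant whose $\Phi$-moment is controlled by $\tfrac12(\tau[\Phi(|a|^2)]+\tau[\Phi(|b|^2)])$ via convexity and $\triangle_2$; yours avoids the preliminary reduction to positive $a,b$ at the cost of tracking $|a^*|^2$ versus $|a|^2$.

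For part~(2) your block-matrix strategy is exactly the Defant--Junge argument the paper invokes by citation, and the core estimate $\tau_{\M\otimes\M_2}[\Phi(A^*A)]=\tau_\M[\Phi(AA^*)]=\tau_\M[\Phi(aa^*+cc^*)]\le C_\Phi\tfrac12(\tau[\Phi(|a|^2)]+\tau[\Phi(|c|^2)])$ is correct. Two technical points deserve attention, however. First, when you ``choose near-optimal decompositions'' and assert $\tfrac12(\tau[\Phi(|a|^2)]+\tau[\Phi(|b|^2)])\le\tau[\Phi({\sup_n}^+x_n)]+\e$, you are implicitly using the reformulation the paper isolates as \eqref{eq:PhiMaxSequ2}: the infimum in \eqref{eq:PhiMaxSequ} may equally be taken over factorizations with $\sup_n\|y_n\|_\infty=1$ rather than $\le 1$ (proved by rescaling $a,b\mapsto\lambda^{1/2}a,\lambda^{1/2}b$ with $\lambda=\sup_n\|y_n\|_\infty$ and using convexity). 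Without this step your near-optimal choice only bounds the \emph{product} $\tfrac12(\tau[\Phi(|a|^2)]+\tau[\Phi(|b|^2)])\sup_n\|u_n\|_\infty$, not the sum alone. Second, the factorization $x_n+y_n=AW_nB$ does not literally fit Definition~\ref{df:PhiMaxOper}, which requires $a,b\in L_0(\M)$; you must take one more polar decomposition, writing $A=|A^*|\,U$ and $B=V\,|B|$ with $|A^*|=(aa^*+cc^*)^{1/2}$, $|B|=(b^*b+d^*d)^{1/2}\in L_0(\M)$ and $UW_nV\in\M$ a contraction, to land back in the definition. With these two clarifications your argument goes through.
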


\begin{proof}
(1).\; Let $(x_n)$ be a sequence of positive elements in $L_{\Phi} (\M).$ Suppose $x_n = a y_n b$ with $|a|^2, |b|^2 \in L_{\Phi} (\M)$ and $\sup_n \| y_n\|_{\8} \le 1.$ Without loss of generality, we can assume $a, b \ge 0.$ Set $c = (a^2 + b^2 )^{1/2}.$ Then there exist two partial isometries $u, v \in \M$ such that
\be
a = c u\quad \text{and}\quad b = v c,
\ee
i.e., $x_n = c u y_n v c$ for all $n,$ and $\sup_n \| u y_n v \|_{\8} \le 1.$ Thus, $x_k \le c^2 \sup_n \| y_n \|_{\8}$ for all $k.$ By the $\triangle_2$-condition, one has
\be\begin{split}
\tau [ \Phi ( c^2 \sup_n \| y_n \|_{\8}) ] & \le \sup_n \| y_n \|_{\8} \tau [ \Phi ( c^2 ) ]\\
& \le C_{\Phi} \sup_n \| y_n \|_{\8} \frac{1}{2} \Big ( \tau \big [ \Phi \big ( |a|^2 \big ) \big ] + \tau \big [ \Phi \big ( |b|^2 \big ) \big ]\Big ).
\end{split}\ee
Combining this with \eqref{eq:PhiMaxSequPosiGe} completes the proof of (1).

(2).\; We have the following useful description of $\tau [ \Phi ( {\sup_n}^+ x_n ) ]:$
\beq\label{eq:PhiMaxSequ2}
\tau \Big [ \Phi \big ( {\sup_n}^+ x_n \big ) \Big ] = \inf \left \{ \frac{1}{2} \Big ( \tau \big [ \Phi \big ( |a|^2 \big ) \big ] + \tau \big [ \Phi \big ( |b|^2 \big ) \big ]\Big ) \right \},
\eeq
where the infimum is taken over all decompositions $x_n = a y_n b$ for $a, b \in L_0 (\M)$ and $(y_n) \subset L_{\8} (\M)$ with $|a|^2, |b|^2 \in L_{\Phi} (\M),$ and $\sup_n \| y_n \|_{\8} = 1.$ Indeed, for a decomposition $x_n = a y_n b$ with $\sup_n \| y_n \|_{\8} \le 1,$ we set $\tilde{a} = \lambda^{1/2} a,$ $\tilde{b} = \lambda^{1/2} b,$ and $\tilde{y}_n = y_n / \lambda$ with $\lambda = \sup_n \| y_n \|_{\8}.$ Then $x_n = \tilde{a} \tilde{y}_n \tilde{b}$ for all $n$ and $\sup_n \| \tilde{y}_n \|_{\8} =1,$ so that
\be
\tau \big [ \Phi \big ( |\tilde{a}|^2 \big ) \big ] + \tau \big [ \Phi \big ( |\tilde{b}|^2 \big ) \big ] \le \lambda \Big ( \tau \big [ \Phi \big ( |a|^2 \big ) \big ] + \tau \big [ \Phi \big ( |b|^2 \big ) \big ] \Big ).
\ee
This concludes \eqref{eq:PhiMaxSequ2}.

Now, to obtain the required inequality, it suffices to repeat the proof of the first part of \cite[Theorem 3.2]{DJ2004} through using \eqref{eq:PhiMaxSequ2}. We omit the details.
\end{proof}

\begin{remark}\label{rk:MaxOrliczSpace}\rm
For a sequences $x =(x_n)$ in $L_{\Phi} (\M),$ set
\be
\big \| {\sup_n}^+ x_n \big \|_{\Phi} : = \; \inf \Big \{ \lambda > 0:\; \tau \Big [ \Phi \Big ( {\sup_n}^+ \frac{x_n}{\lambda} \Big ) \Big ] \le 1 \Big \}.
\ee
One can check that $\| {\sup_n}^+ x_n \|_{\Phi}$ is a norm in $x = (x_n).$ Define
\be
L_{\Phi} (\M; \el^{\8}): = \; \Big \{(x_n) \subset L_{\Phi} (\M):\; \tau \Big [ \Phi \Big ( {\sup_n}^+ \frac{x_n}{\lambda} \Big ) \Big ] < \8\; \text{for some}\; \lambda >0 \Big \},
\ee
equipped with $\| (x_n) \|_{L_{\Phi} (\M; \el^{\8})} = \| {\sup_n}^+ x_n \|_{\Phi}.$ Then $L_{\Phi} (\M; \el^{\8})$ is a Banach space. For $1 \le p < \8,$ if $\Phi (t) = t^p$ then $L_{\Phi} (\M; \el^{\8}) = L_p (\M; \el^{\8})$ with equivalent norms. The details are left to the interested readers.
\end{remark}

We are ready to state and prove the main result of this section.

\begin{theorem}\label{th:Inter}
Let $S = (S_n)_{n \ge 0}$ be a sequence of maps from $L^+_1(\mathcal{M}) + L^+_{\8}(\mathcal{M}) \mapsto L^+_{0}(\mathcal{M}).$ Let $1 \le p < \8.$ Assume that $S$ is subadditive, and order preserving in the sense that for all $n \ge 1,$ $S_n(x) \le S_n(y)$ whenever $x \le y$ in $L^+_0 (\M).$ If $S$ is simultaneously of weak type $(p, p)$ with constant $C_p$ and of type $(\8, \8)$ with constant $C_{\8},$ then for an Orlicz function $\Phi$ with $p <p_{\Phi}\le q_{\Phi}< \8,$ there exists a positive constant $C$ depending only on $C_p, C_{\8}, p_{\Phi}$ and $q_{\Phi},$ such that
\begin{equation}\label{eq:PhiMaxOper}
\tau \Big [ \Phi \big ( {\sup_n}^+ S_n( x) \big ) \Big ]\leq C \tau \big [ \Phi( x) \big ],
\end{equation}
for all $x \in L^+_{\Phi}(\mathcal{M}).$
\end{theorem}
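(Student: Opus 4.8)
The plan is to run a Calder\'on--Zygmund-type decomposition at a variable level and sum the resulting dyadic contributions, exploiting the $\triangle_2$-condition to control the Orlicz integral. Fix $x\in L_\Phi^+(\M)$ and, for each integer $k\in\ent$, apply the weak type $(p,p)$ hypothesis at level $\lambda=2^k$ to obtain a projection $e_k\in\M$ with $\tau(e_k^\perp)\le (C_p\|x\|_p/2^k)^p$ and $e_k S_n(x) e_k\le 2^k$ for all $n$. The issue is that $x$ itself need not lie in $L_p(\M)$, so first I would run the argument instead on the truncations of $x$ obtained from its spectral decomposition: write $x=\int_0^\infty s\,dE_s$ and for each $k$ split $x=x_k'+x_k''$ where $x_k''=\int_{(2^k,\infty)} s\,dE_s$ is the ``large part'' and $x_k'=x-x_k''\in\M$ with $\|x_k'\|_\infty\le 2^k$. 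Since $p<p_\Phi$, a direct computation using the index characterization of $p_\Phi$ (property (2) in the excerpt) shows $x\in L_p(\M)$ locally on each spectral interval and, more importantly, that $\sum_k 2^{kp}\tau(E_{(2^k,2^{k+1}]})$ and related quantities are summable against $\Phi$; this is where $p<p_\Phi$ is used.

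The core estimate is then obtained by freezing $k$ and using both hypotheses on the two pieces of $x$ at level comparable to $2^k$. By subadditivity, $S_n(x)\le S_n(x_k')+S_n(x_k'')$. For the bounded part apply type $(\infty,\infty)$: $S_n(x_k')\le C_\infty\|x_k'\|_\infty\le C_\infty 2^k$ for all $n$, so this contributes a constant times $2^k$ to the supremum on the whole algebra. For the large part apply weak type $(p,p)$ at level $2^k$ to $x_k''$, producing a projection $e_k$ with $e_k S_n(x_k'')e_k\le 2^k$ and $\tau(e_k^\perp)\le (C_p\|x_k''\|_p/2^k)^p$. Setting $q_k=\bigvee_{j\ge k}e_j^\perp$, I get $\tau(q_k)\le\sum_{j\ge k}(C_p\|x_j''\|_j/2^j)^p$, and on the complementary projection $q_k^\perp$ one has $q_k^\perp S_n(x)q_k^\perp\lesssim 2^k$ uniformly in $n$. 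The standard trick is now to build the dominating operator $a$ as a sum $a=\sum_k 2^k(q_k-q_{k+1})$ (telescoping the projections $q_k$, which decrease to $0$), so that $a$ is positive and, for $n$ and for $x$ localized below level $2^{k+1}$, one checks $S_n(x)$ is dominated (in the two-sided compression sense that underlies Definition~\ref{df:PhiMaxOper}) by a constant multiple of $a$. Concretely I would produce the factorization $S_n(x)=\alpha\, y_n\,\beta$ with $|\alpha|^2,|\beta|^2\lesssim a$ and $\|y_n\|_\infty\lesssim 1$ by the same projection-compression device Junge and Xu use, so that $\tau[\Phi(\sup_n^+ S_n(x))]\lesssim \tau[\Phi(Ca)]\lesssim \tau[\Phi(a)]$ by the $\triangle_2$-condition.

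It then remains to bound $\tau[\Phi(a)]$. Since $a=\sum_k 2^k(q_k-q_{k+1})$ with $q_k$ decreasing, functional calculus gives $\tau[\Phi(Ca)]=\sum_k\Phi(C2^k)\tau(q_k-q_{k+1})\le\sum_k\Phi(C2^k)\tau(q_k)$, and inserting the bound on $\tau(q_k)$ and interchanging the order of summation reduces everything to estimating $\sum_j\|x_j''\|_j^p\sum_{k\le j}2^{-jp}\Phi(C2^k)$. Because $q_\Phi<\infty$, property (2) for $q_\Phi$ gives $\sum_{k\le j}\Phi(C2^k)\lesssim \Phi(C2^j)$ (the partial sums of $\Phi(2^k)$ are comparable to the last term when $q_\Phi<\infty$, since $\Phi$ grows at most polynomially of degree $q_\Phi$ and the geometric-type series converges), so the inner sum is $\lesssim 2^{-jp}\Phi(C2^j)$; and $\|x_j''\|_j^p=\int_{(2^j,\infty)}s^p\,d\lambda_s(x)\le$ a tail that, summed against $2^{-jp}\Phi(C2^j)$ over $j$, telescopes via the $p<p_\Phi$ index inequality into a constant multiple of $\tau[\Phi(x)]=\int_0^\infty\Phi(\mu_t(x))\,dt$. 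Carefully assembling these two summability facts yields $\tau[\Phi(a)]\le C\tau[\Phi(x)]$ with $C$ depending only on $C_p,C_\infty,p_\Phi,q_\Phi$, which is \eqref{eq:PhiMaxOper}.

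The main obstacle is the noncommutative bookkeeping in constructing the single dominating operator $a$ from the family of projections $e_k$: unlike the commutative case one cannot simply take pointwise maxima, and one must telescope the decreasing projections $q_k=\bigvee_{j\ge k}e_j^\perp$ while simultaneously producing the three-factor decomposition $S_n(x)=\alpha y_n\beta$ required by Definition~\ref{df:PhiMaxOper} rather than a one-sided domination; reconciling the weak-type compression $e_k S_n e_k\le 2^k$ with the ``$a y_n b$'' form is exactly the point where the reformulation \eqref{eq:PhiMaxSequ2} of the $\Phi$-maximal quantity (as opposed to the naive analogue of \eqref{eq:p-MaxNorm}) becomes indispensable. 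A secondary technical point is justifying the truncation argument and the convergence of the dyadic sums when $x$ merely lies in $L_\Phi^+(\M)$ and not in $L_p(\M)$, which is handled by the $p<p_\Phi$ hypothesis.
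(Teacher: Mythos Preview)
Your overall strategy---split $x$ at each dyadic level, apply weak type $(p,p)$ to the tail $x_k''$, telescope the resulting projections, and bound $\tau[\Phi(a)]$ by a double dyadic sum---mirrors the classical commutative argument, and the summability computation in your last paragraph is essentially correct (a small mis-attribution: $\sum_{k\le j}\Phi(C2^k)\lesssim\Phi(C2^j)$ follows from convexity of $\Phi$ alone, while $p<p_\Phi$ is what drives the estimate $\sum_{j\le i}2^{-jp}\Phi(2^j)\lesssim 2^{-ip}\Phi(2^i)$ after the second interchange of sums).

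The genuine gap is the domination step. The operator $a=\sum_k 2^k(q_k-q_{k+1})$ does \emph{not} satisfy $S_n(x)\le Ca$ for any absolute constant $C$. Writing $p_k=q_k-q_{k+1}$, you only control the diagonal blocks $p_k S_n(x)p_k\le C2^k p_k$; Cauchy--Schwarz on the off-diagonal blocks gives at best
\[
(S_n(x)\xi,\xi)\;\le\;\Big(\sum_k 2^{k/2}\|p_k\xi\|\Big)^2,
\]
and this is \emph{not} dominated by $C\sum_k 2^k\|p_k\xi\|^2=C(a\xi,\xi)$: take $\|p_k\xi\|=2^{-k/2}$ on $N$ consecutive levels and the ratio is $N$. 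The ``Junge--Xu projection-compression device'' you invoke is exactly the cure, but it does not return your $a$; it introduces weights $\alpha_k$ and produces the majorant $\big(\sum_{k\le N}2^k/\alpha_k\big)\sum_{k\le N}\alpha_k p_k$ on $q_N$, together with a residual $(\8,\8)$-controlled piece on $1-q_N$. For any admissible $(\alpha_k)$ this is strictly larger than $\sum_k 2^k p_k$, and your subsequent functional-calculus computation of $\tau[\Phi(a)]$ no longer applies to it.

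The paper's proof avoids this obstruction by a two-layer construction. It first passes to $\tilde x=\sum_i 2^i e_i$ with $e_i=E_{(2^i,\8)}(x)$ and applies the Junge--Xu weighted majorant to each \emph{projection} $e_i$ separately (each $e_i$ is bounded, so the $(1-q_N)$ piece is available), with the specific choice $\alpha_k=h(2^{(N-k)p})$, $h(t)=\min\{t^{-\delta},1\}$, $\delta=\tfrac12(1/p+1/q)$ for some $p<q<p_\Phi$. This yields majorants $a_i$ whose \emph{singular values} obey $\mu_t(a_i)\le C\,h(t/\tau(e_i))$. Since the $a_i$ do not commute, $\tau[\Phi(\sum_i 2^i a_i)]$ cannot be read off by functional calculus; instead one uses the sublinearity of $\tilde\mu_t(\cdot)=t^{-1}\int_0^t\mu_s(\cdot)\,ds$ to reduce to a scalar integral operator on $(0,\8)$, which is then handled by the moment interpolation theorem of \cite{BC2012}. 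This reduction to projections, followed by singular-value assembly, is the essential mechanism that your outline is missing.
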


\begin{proof}
Since $S$ is of weak type $(p, p)$ with constant $C_p,$ for any $x \in L^+_{p} (\M)$ and each $\lambda >0$ there is a projection $q^{(\lambda)} \in \M$ such that
\be
\tau (1- q^{(\lambda)}) \le \frac{C^{p}_{p} \tau (x^p)}{\lambda^{p}} \quad \text{and} \quad q^{(\lambda)} S_n (x) q^{(\lambda)} \le \lambda q^{(\lambda)},\quad \forall n \ge 1.
\ee
For any $k \in \mathbb{Z}$ we set
\be
q_k = \bigwedge_{j \ge k} q^{(2^j)} \quad \text{and} \quad p_k = q_k - q_{k-1}.
\ee
We claim the following two facts.
\begin{enumerate}[\rm (i)]

\item  $q_k S_n(x) q_k \le 2^k q_k$ and
\beq\label{eq:weak(p,p)-inequ}
\tau (1- q_k) \le \frac{C^p_p}{1-2^{-p}} \frac{\tau (x^p)}{2^{kp}},\quad \forall k \in \mathbb{Z}.
\eeq

\item Suppose in addtion, that $x \in \M.$ Fix an integer $N$ and a sequence $(\alpha_k)^N_{k = -\8}$ of positive numbers for which $\sum_{k \le N} \frac{2^k}{\alpha_k}  < \8.$ Then the operator
\beq\label{eq:majorant}
a = 2 C_{\8} \| x\| (1- q_N) + 2 \Big ( \sum_{k \le N} \frac{2^k}{\alpha_k}\Big ) \sum_{k \le N} \alpha_k p_k.
\eeq
is a majorant of $S(x),$  i.e., $S_n (x) \le a$ for all $n \ge 1.$
\end{enumerate}

To prove these two statements, note that
\be
\tau (1- q_k) \le \sum_{j \ge k} \tau (1- q^{(2^j)}) \le C^p_p \tau (x^p) \sum_{j \ge k} 2^{-j p} = \frac{C^p_p}{1-2^{-p}} \frac{\tau (x^p)}{2^{kp}},
\ee
which proves \eqref{eq:weak(p,p)-inequ}. On the other hand, for a fixed $\xi \in \mathbb{H}$ we have
\be\begin{split}
(q_N S_n (x) q_N \xi, \xi ) & = \Big ( \sum_{k,m \le N} p_k S_n (x) p_m \xi, \xi \Big ) \le \sum_{k,m \le N} \| p_k S_n (x) p_m \| \| p_k \xi\| \|p_m \xi \|\\
& \le \sum_{k,m \le N} \| p_k S_n (x) p_k \|^{\frac{1}{2}} \| p_m S_n (x) p_m \|^{\frac{1}{2}}\| p_k \xi\| \|p_m \xi \|\\
& = \Big ( \sum_{k \le N} \| p_k S_n (x) p_k \|^{\frac{1}{2}} \| p_k \xi\| \Big )^2.
\end{split}\ee
Since $p_k S_n(x) p_k \le 2^k p_k$ and so $\| p_k S_n (x) p_k \| \le 2^k,$ one concludes that
\be
(q_N S_n (x) q_N \xi, \xi ) \le \Big ( \sum_{k \le N} \frac{2^k}{\alpha_k} \Big ) \sum_{k \le N} \alpha_k \| p_k \xi\|^2 = (a_N \xi, \xi),
\ee
where $a_N = \Big ( \sum_{k \le N} \frac{2^k}{\alpha_k}\Big ) \sum_{k \le N} \alpha_k p_k.$ Note that
\be
S_n (x) \le 2 q_N S_n (x) q_N + 2 (1-q_N) S_n (x) (1-q_N).
\ee
Thus, $a$ is a majorant of $S(x).$

Take $x \in L^+_{\Phi} (\M)$ and introduce
\be
\tilde{x} = \sum_{i \in \mathbb{Z}} 2^{i +1} E_{(2^i, 2^{i+1} ]}(x) = \sum_{i \in \mathbb{Z}} 2^i e_i,
\ee
where $e_i = E_{(2^i, \8)}(x).$ For a fixed $e_i,$ we will construct a suitable majorant of the sequence $S(e_i) = (S_n (e_i) )_{n \ge 1}.$ To this end, we take $p< \beta < p_{\Phi}$ and set
\be
\delta = \frac{1}{2} \Big ( \frac{1}{p} + \frac{1}{\beta} \Big )\quad \text{and}\quad \alpha_k = 2^{- (N-k)p \delta},
\ee
where $N$ is an integer and will be fixed later on such that it only depends on $p$ and $C_p.$ By \eqref{eq:majorant} we obtain the corresponding majorant of the sequence $S(e_i) = (S_n (e_i) )_{n \ge 1},$ denoted by $a_i.$ We claim that there exists a constant $C>0$ depending only on $C_p, C_{\8}, p$ and $\beta$ such that
\beq\label{eq:majorantSingularValueControl}
\mu_t (a_i) \le C h \Big ( \frac{t}{\tau(e_i)} \Big )\quad \text{and}\quad \tilde{\mu}_t (a_i) \le \frac{C}{1- \delta} h \Big ( \frac{t}{\tau(e_i)} \Big ), \quad \forall t > 0,
\eeq
where $h(t) = \min \{t^{-\delta},\; 1\}$ and, $\tilde{\mu}_t (x) = \frac{1}{t} \int^t_0 \mu_s (x) d s$ for any $x \in L_0 (\M)$ and all $t >0.$

Indeed, an immediate computation yields that
\be
\mu (a_i) \le C_{p, \beta, N, \8} \Big ( \chi_{[0, \tau (1-q_N))} + \sum_{k \le N} 2^{- \delta (N-k)p} \chi_{(\tau (1-q_k)), \tau (1-q_{k-1})]}\Big ),
\ee
where $C_{p, \beta, N, \8} = 2( C_\8 + \sum_{k \le N} 2^k / \alpha_k).$
By \eqref{eq:weak(p,p)-inequ} one has
\be
\mu (a_i) \le C_{p, \beta, N, \8} \Big ( \chi_{[0, C_p' 2^{-pN} \tau(e_i))} + \sum_{k \le N} 2^{- \delta (N-k)p} \chi_{(C_p' 2^{-kp} \tau (e_i), C_p' 2^{(-k+1)p}\tau (e_i)]}\Big ),
\ee
where $C_p' = C^p_p /(1-2^{-p}).$ Since for any $t \in (C_p' 2^{-kp} \tau (e_i), C_p' 2^{(-k+1)p}\tau (e_i)],$
\be
h( 2^{(N-k)p} ) \le h \Big ( 2^{-p} \frac{2^{pN} t}{C_p'\tau(e_i)} \Big ),
\ee
it follows that
\be\begin{split}
\mu_t (a_i) & \le C_{p, \beta, N, \8} \Big ( \chi_{[0, C_p' 2^{-pN} \tau(e_i))} (t) + h \Big ( 2^{-p} \frac{2^{pN} t}{C_p'\tau(e_i)} \Big ) \chi_{(C_p' 2^{-pN} \tau (e_i), \8)}(t) \Big )\\
& = C_{p, \beta, N, \8} \Big [ \chi_{[0, 1]} \Big ( \frac{2^{pN} t}{C_p'\tau(e_i)} \Big ) + h \Big ( 2^{-p} \frac{2^{pN} t}{C_p'\tau(e_i)} \Big ) \chi_{(1, \8)}  \Big ( \frac{2^{pN} t}{C_p'\tau(e_i)} \Big ) \Big ]\\
& \le C_{p, \beta, N, \8} h \Big ( 2^{-p} \frac{2^{pN} t}{C_p'\tau(e_i)} \Big ) \le C h \Big ( \frac{t}{\tau(e_i)} \Big ),
\end{split}\ee
provided we take $N$ to be the least integer satisfying $N \ge \frac{1}{p} \log_2 C'_p +1,$ which implies that $2^{pN -p} / C'_p \ge 1,$ as $h$ is decreasing. This proves the first inequality in \eqref{eq:majorantSingularValueControl}, from which the second one follows.

Since $x \mapsto \tilde{\mu} (x)$ is sublinear, we have
\be\begin{split}
\tau \Big [ \Phi \Big ( \sum_{i \in \mathbb{Z}} 2^i a_i \Big ) \Big ]
\le & \int^{\8}_0 \Phi \Big ( \sum_{i \in \mathbb{Z}} 2^i \tilde{\mu}_t ( a_i ) \Big ) d t
\lesssim \int^{\8}_0 \Phi \Big [ \sum_{i \in \mathbb{Z}} 2^i h \Big ( \frac{t}{\tau(e_i)} \Big ) \Big ] d t\\
= & \int^{\8}_0 \Phi \Big [ \int^{\8}_0 \sum_{i \in \mathbb{Z}} 2^i \chi_{(0, \tau(e_i)]} \Big ( \frac{t}{s} \Big ) (- h' ( s )) d s \Big ] d t.\\
\end{split}\ee
Note that $\mu_t (\tilde{x}) = \sum_{i \in \mathbb{Z}} 2^i \chi_{(0, \tau(e_i)]} (t),$ we have
\be\begin{split}
\tau \Big [ \Phi \Big ( \sum_{i \in \mathbb{Z}} 2^i a_i \Big ) \Big ] & \lesssim \int^{\8}_0 \Phi \Big [ \int^{\8}_0  \mu_{\frac{t}{s}} (\tilde{x})  (- h' ( s )) d s \Big ] d t  \le \int^{\8}_0 \Phi \Big [ \int^{\8}_0 \tilde{\mu}_{\frac{t}{s}} (\tilde{x}) (- h' ( s )) d s \Big ]  d t.\\
\end{split}\ee
Define $T: L_1 (\M) + L_{\8} (\M) \mapsto L_1 (0, \8) + L_{\8} (0, \8)$ by
\be
(T x) (t) = \int^{\8}_0 \tilde{\mu}_{\frac{t}{s}} (x) (- h' ( s )) d s,\quad \forall t >0.
\ee
Then
\be
\| T x \|_\beta \le \int^{\8}_0 \| \tilde{\mu}_{\frac{\cdot}{s}} (x) \|_\beta (- h' ( s )) d s = C_{p,\beta} \| \tilde{\mu} (x) \|_\beta \le C_{p,\beta} \| x \|_{L_\beta (\M)},
\ee
where the last inequality is obtained by the classical Hardy-Littlewood inequality: the mapping $f \mapsto \frac{1}{t}\int_{0}^{t}|f(s)|ds$ is bounded in $L_\beta (0,\infty)$ provided $1 < \beta \le \8.$ Also, it is easy to check that $T$ is of type $(\8, \8).$ Thus, by Theorem 2.1 in \cite{BC2012} we conclude that
\be\begin{split}
\tau \Big [ \Phi \Big ( \sum_{i \in \mathbb{Z}} 2^i a_i \Big ) \Big ] \lesssim \int^{\8}_0 \Phi \Big [ \int^{\8}_0 \tilde{\mu}_{\frac{t}{s}} (\tilde{x}) (- h' ( s )) d s \Big ]  d t \lesssim \tau \big [ \Phi (\tilde{x} ) \big ].
\end{split}\ee
Since $\tilde{x} \le 2 x,$ $S$ is order preserving, and so
\be
S_n ( x) \le S_n (\tilde{x}) \le \sum_{i \in \mathbb{Z}} 2^i a_i,\quad \forall n \ge 1,
\ee
we conclude \eqref{eq:PhiMaxOper}.
\end{proof}

\begin{remark}\label{re:Inter}
\begin{enumerate}[\rm (1)]\rm

\item The classical Marcinkiewicz interpolation theorem has been extended to include Orlicz spaces as interpolation classes by A. Zygmund, A. P. Calder\'{o}n {\it et al.} (cf. e.g. \cite{M1989}). The noncommutative analogue of this associated with a convex function was recently obtained in \cite{BC2012}. Theorem \ref{th:Inter} can be considered as a noncommutative Marcinkiewicz type interpolation theorem for a convex function of maximal operators.

\item Should Theorem \ref{th:Inter} be true whenever $S$ is simultaneously of weak type $(p_0, p_0)$ and of type $(p_1, p_1)$ and $\Phi$ an Orlicz function such that $1 \le p_0 <p_{\Phi}\le q_{\Phi}< p_1 \le \8$ (i.e., the case $p_1<\8$ is included). This was raised as an open question in the preliminary version of this paper, and affirmatively answered by Dirksen \cite{Dirk2012b} recently through extending the argument presented here to the case of restricted weak type inequalities for noncommutative maximal operators. However, Theorem \ref{th:Inter} is sufficient for our purpose of proving noncommutative maximal inequalities associated with convex functions (see Theorems \ref{th:PhiDoob} and \ref{th:NcMaxErgodi} below).

\end{enumerate}
\end{remark}

The argument presented above can be used to obtain the corresponding interpolation theorem for maximal operators on noncommutative symmetric spaces, which is different from that of \cite{Dirk2012a} and slightly simpler. Let $E$ be a rearrangement invariant (r.i., in short) Banach space with the Boyd indices $p_E \le q_E$ (for details on r.i. spaces we refer to \cite{LT1979}). Further, let $E(\M, \tau)$ be the associated noncommutative symmetric space (see e.g. \cite{DDP1989, DDP1992, KS2008, Xu1991}).

\begin{theorem}\label{th:InterSymspace}
Let $S = (S_n)_{n \ge 0}$ be a sequence of maps from $L^+_1(\mathcal{M}) + L^+_{\8}(\mathcal{M}) \mapsto L^+_{0}(\mathcal{M}).$ Assume that $S$ is subadditive and order preserving. Let $1 \le p < \8.$ Let $E$ be a rearrangement invariant space such that $p_E > p.$ If $S$ is simultaneously of weak type $(p, p)$ with constant $C_p$ and of type $(\8, \8)$ with constant $C_{\8},$ then there exists a positive constant $C_E$ depending only on $C_p, C_{\8}, p$ and $p_E,$ such that for any $x \in E^+ (\M, \tau)$ there exists $a \in E^+ (\M, \tau)$ satisfying
\begin{equation}\label{eq:InterSymspace}
\| a \|_{E (\M, \tau)} \le C_E \| x \|_{E(\M, \tau)} \quad \text{and}\quad S_n (x) \le a, \quad \forall n \ge 0.
\end{equation}
\end{theorem}

\begin{proof}
Indeed, the construction of the majorant of $S = (S_n (\tilde{x}))$ in the proof for Theorem \ref{th:Inter} is clearly valid. Hence, we have
\be\begin{split}
\Big \|  \sum_{i \in \mathbb{Z}} 2^i a_i \Big \|_{E(\M, \tau)} & \lesssim \Big \| \int^{\8}_0  \mu_{\frac{\cdot}{s}} (\tilde{x})  (- h' ( s )) d s \Big \|_E  \le \int^{\8}_0  \|D_s \mu (\tilde{x})\|_E  (- h' ( s )) d s\\
& \le \int^{\8}_0  \|D_s\|_E  (- h' ( s )) d s \| \tilde{x} \|_{E(\M, \tau)}.
\end{split}
\ee
Here $D_s$ ($0<s <\8$) are linear operators acting on measurable functions $f$ on $(0, \8)$ defined by
\be
(D_s f) (t) = f (t/s),\quad 0 < t < \8.
\ee
It is known (cf. \cite[Sect. 2.b]{LT1979}) that for $1< \beta < p_E$ there is a constant $C_{E, \beta}>0$ such that
\be
\| D_s \|_E \le C_{E, \beta} s^{\frac{1}{\beta}},\quad \forall 1 < s < \8.
\ee
Thus
\be
\int^{\8}_0  \|D_s\|_E  (- h' ( s )) d s \le C_{E, \beta} \int^{\8}_1 s^{\frac{1}{\beta} - \delta -1} d s =C_{p_E, \beta, p} < \8.
\ee
This completes the proof.
\end{proof}

With the help of Theorem \ref{th:InterSymspace}, the associated maximal inequalities on noncommutative symmetric spaces are in order, including Doob's inequality, Dunford-Schwartz and Stein maximal ergodic inequalities, as well as the corresponding pointwise convergence theorems (see \cite{JX2007} for detailed information). We omit the details.

\section{Main results}\label{PrTh}

Let $\Phi$ be an Orlicz function. As noted in \cite[Remark 1.1]{BC2012}, if $1 < p_{\Phi} \le q_{\Phi}< \8,$ then for any noncommutative $L_{\Phi}$-martingale $x = (x_n),$ there exists a unique $x_{\8} \in L_{\Phi} (\M)$ such that $x_n = \mathcal{E}_n (x_{\8})$ for all $n.$ We simply write $x_{\8} = x$ in this case.

Our first main result is the following noncommutative Doob inequality associated with a convex function, generalizing Junge's noncommutative Doob inequality in $L_p$ \cite{Junge2002}.

\begin{theorem}\label{th:PhiDoob}
Let $\M$ be a finite von Neumann algebra with a normalized normal faithful trace $\tau,$ equipped with a filtration $( \M_n )_{n\ge 0}$ of von Neumann subalgebras of $\M.$ Let $\Phi$ be an Orlicz function and $x = ( x_n )$ be a noncommutative $L_{\Phi}$-martingale with respect to $({\mathcal{M}}_{n}).$ If $1 < p_{\Phi} \le q_{\Phi}< \8,$ then
\beq\label{eq:PhiDoob}
\tau \Big [ \Phi \big ( {\sup_n}^+ x_n \big ) \Big ] \thickapprox \tau \big [ \Phi( |x| ) \big ].
\eeq
\end{theorem}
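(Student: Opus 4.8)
The plan is to prove separately the two estimates implicit in the equivalence \eqref{eq:PhiDoob}: the bound $\tau[\Phi({\sup_n}^+ x_n)] \lesssim \tau[\Phi(|x|)]$ is the genuine Doob part and will be deduced from Theorem \ref{th:Inter}, while the reverse bound $\tau[\Phi(|x|)] \lesssim \tau[\Phi({\sup_n}^+ x_n)]$ is elementary. The hypothesis $1 < p_\Phi \le q_\Phi < \infty$ is used in three ways: it forces $\Phi \in \triangle_2$; it guarantees (as recalled before the statement) that $x = (\mathcal{E}_n(x_\infty))_n$ for a unique $x_\infty \in L_\Phi(\M)$, abbreviated $x = x_\infty$; and it makes the endpoint $p = 1$ admissible in Theorem \ref{th:Inter}.

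\emph{The main inequality.} First I would reduce to a positive generator. Write $x = z_1 - z_2 + i(z_3 - z_4)$ with $z_1, z_2$ the positive and negative parts of $\Re x = \tfrac12(x + x^*)$ and $z_3, z_4$ those of $\Im x$; then each $z_j \in L_\Phi^+(\M)$ and $x_n = \mathcal{E}_n(z_1) - \mathcal{E}_n(z_2) + i\mathcal{E}_n(z_3) - i\mathcal{E}_n(z_4)$. For $|\zeta| = 1$, a factorization $w_n = a y_n b$ with $\|y_n\|_\infty \le 1$ yields $\zeta w_n = a(\zeta y_n) b$ with $\|\zeta y_n\|_\infty \le 1$, so $\tau[\Phi({\sup_n}^+ \zeta w_n)] = \tau[\Phi({\sup_n}^+ w_n)]$; combined with Proposition \ref{prop:BasicPhiMax}(2) this reduces the problem to proving $\tau[\Phi({\sup_n}^+ \mathcal{E}_n(z))] \lesssim \tau[\Phi(z)]$ for $z \in L_\Phi^+(\M)$, together with the bound $\tau[\Phi(z_j)] \lesssim \tau[\Phi(|x|)]$. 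The latter follows from $z_j \le \tfrac12(u^*|x|u + v^*|x^*|v)$ for suitable partial isometries $u, v$ (apply \eqref{eq:OperatorModuleTriangleInequa} to $x \pm x^*$), the convexity inequality $\nu(\Phi(|\alpha p + (1-\alpha)q|)) \le \alpha\,\nu(\Phi(|p|)) + (1-\alpha)\,\nu(\Phi(|q|))$ from Section \ref{pre}, and $\tau[\Phi(u^*|x|u)] = \int_0^\infty \Phi(\mu_t(u^*|x|u))\,dt \le \int_0^\infty \Phi(\mu_t(x))\,dt = \tau[\Phi(|x|)]$ (and the same for $x^*$, using $\mu_t(x^*) = \mu_t(x)$).

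Next I would apply Theorem \ref{th:Inter} to $S = (S_n)$ with $S_n := \mathcal{E}_n$, viewed as a map $L_1^+(\M) + L_\infty^+(\M) = L_1^+(\M) \mapsto L_0^+(\M)$ (here $\M$ is finite). It is additive, hence subadditive. It is of type $(\infty,\infty)$ with constant $1$: for $z \in L_\infty^+(\M)$ the majorant $a = \|z\|_\infty\, 1 \in L_\infty^+(\M)$ dominates every $\mathcal{E}_n(z)$. And it is of weak type $(1,1)$: this is precisely Cuculescu's noncommutative weak type $(1,1)$ maximal inequality for martingales, which produces, for $z \in L_1^+(\M)$ and $\lambda > 0$, a decreasing sequence of projections $q_n \in \M_n$ with $q_n \mathcal{E}_n(z) q_n \le \lambda q_n$ and $\tau(1 - \bigwedge_n q_n) \le \lambda^{-1}\|z\|_1$; then $e := \bigwedge_n q_n$ satisfies $e \mathcal{E}_n(z) e \le \lambda$ for all $n$ and $\tau(e^\perp) \le \|z\|_1/\lambda$. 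Since $1 < p_\Phi \le q_\Phi < \infty$, Theorem \ref{th:Inter} applied with $p = 1$ gives a constant $C$ depending only on $p_\Phi, q_\Phi$ with $\tau[\Phi({\sup_n}^+ \mathcal{E}_n(z))] \le C\, \tau[\Phi(z)]$ for all $z \in L_\Phi^+(\M)$, which is exactly what the reduction required.

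\emph{The reverse inequality.} Fix $m$ and any factorization $x_n = a y_n b$ with $\|y_n\|_\infty \le 1$ and $|a|^2, |b|^2 \in L_\Phi(\M)$; submultiplicativity of generalized singular numbers gives $\mu_{2t}(x_m) \le \mu_t(a)\,\mu_t(b) \le \tfrac12(\mu_t(|a|^2) + \mu_t(|b|^2))$, whence by convexity of $\Phi$, $\tau[\Phi(|x_m|)] = 2\int_0^\infty \Phi(\mu_{2t}(x_m))\,dt \le \tau[\Phi(|a|^2)] + \tau[\Phi(|b|^2)]$, and taking the infimum over factorizations yields $\tau[\Phi(|x_m|)] \le 2\, \tau[\Phi({\sup_n}^+ x_n)]$ for every $m$. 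Since $\Phi \in \triangle_2$, the martingale convergence theorem gives $x_m = \mathcal{E}_m(x) \to x$ in measure, and $y \mapsto \tau[\Phi(|y|)]$ is lower semicontinuous for convergence in measure (by finiteness of $\M$, normality of $\tau$, and truncating $\Phi$), so $\tau[\Phi(|x|)] \le \liminf_m \tau[\Phi(|x_m|)] \le 2\,\tau[\Phi({\sup_n}^+ x_n)]$. The substance of the argument lies entirely in the choice of endpoints for Theorem \ref{th:Inter}: once the weak type $(1,1)$ estimate (Cuculescu) is interpolated against the trivial type $(\infty,\infty)$ estimate, what remains is bookkeeping. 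The steps demanding care are the reduction to positive generators — ensuring the $\triangle_2$-inequalities of Section \ref{pre} compose with controlled constants and that $\tau[\Phi({\sup_n}^+ \cdot)]$ tolerates the four-term splitting — and, for the lower bound, the lower semicontinuity of the modular together with the convergence $\mathcal{E}_m(x) \to x$, both of which rely on $\Phi \in \triangle_2$.
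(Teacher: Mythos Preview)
Your proof is correct and follows essentially the same route as the paper: the main inequality is obtained by applying Theorem \ref{th:Inter} to $S=(\mathcal{E}_n)$ with Cuculescu's weak type $(1,1)$ and the trivial type $(\infty,\infty)$ bound, after reducing to positive generators, and the reverse inequality is proved via the singular-number estimate $\mu_{2t}(x_m)\le \mu_t(a)\mu_t(b)$ for a factorization $x_n=ay_nb$. You are in fact more explicit than the paper in two places---the details of the four-term positive decomposition and the passage from $\tau[\Phi(|x_m|)]$ to $\tau[\Phi(|x|)]$ via martingale convergence and lower semicontinuity---but the substance is identical.
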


\begin{proof}

Decomposing an operator into a linear combination of four positive ones, by Proposition \ref{prop:BasicPhiMax} (2) we can assume that $x = (x_n)$ is a positive martingale in $L_{\Phi} (\M).$ Let $S = (\mathcal{E}_n).$ By Cuculescu's weak type $(1,1)$ maximal martingale inequality \cite{Cucu1971}, we see that $S$ is of weak type $(1,1).$ Also, $S$ is trivially of type $(\8, \8),$ due to the well known fact that conditional expectations are contractions for the operator norm. Thus, by Theorem \ref{th:Inter} we conclude that
\be
\tau \Big [ \Phi \big ( {\sup_n}^+ x_n \big ) \Big ] \lesssim \tau \big [ \Phi( |x| ) \big ].
\ee

To prove the converse inequality, consider a decomposition $x_n = a y_n b$ for all $n$ and $\sup_n \| y_n\|_{\8} \le 1.$ One has
\be\begin{split}
\tau \big [ \Phi( |x| ) \big ] & = \int^{\8}_0 \Phi (\mu_t (x)) d t\\
& \le 2 \sup_n \| y_n \|_{\8} \int^{\8}_0 \Phi [ \mu_t ( |a|) \mu_t ( |b|) ] d t\\
& \le 2 \sup_n \| y_n \|_{\8} \int^{\8}_0 \Phi \Big [ \frac{1}{2} \Big ( \mu_t ( |a|)^2 +  \mu_t ( |b|)^2 \Big ) \Big ] d t\\
& \le 2 \sup_n \| y_n \|_{\8} \frac{1}{2} \Big ( \tau \big [ \Phi ( |a|^2) \big ] +  \tau \big [ \Phi ( |b|^2) \big ] \Big ).
\end{split}\ee
Thus,
\be
\tau \big [ \Phi( |x| ) \big ] \le 2\tau \Big [ \Phi \big ( {\sup_n}^+ x_n \big ) \Big ].
\ee
This completes the proof.
\end{proof}

\begin{remark}\label{re:PhiMaxSpace}\rm
Let $\Phi$ be an Orlicz function. We define the Hardy-Orlicz maximal space of noncommutative martingales as
\be
\mathcal{H}^{\mathrm{max}}_{\Phi} (\M): = \Big \{ x \in L_{\Phi} (\M):\; \| x \|_{\mathcal{H}^{\mathrm{max}}_{\Phi}} = \big \| {\sup_n}^+ \mathcal{E}_n (x) \big \|_{\Phi} < \8 \Big \}.
\ee
(See \cite[Sect. 4]{JX2005} for the case $\Phi (t) = t^p$.) Then, Theorem \ref{th:PhiDoob} implies that $\mathcal{H}^{\mathrm{max}}_{\Phi} (\M) = L_{\Phi} (\M)$ with equivalent norms provided $1 < p_{\Phi} \le q_{\Phi}<\8.$
\end{remark}

As a consequence of Theorem \ref{th:PhiDoob}, we obtain the following noncommutative Burkholder-Davis-Gundy inequality associated with a convex function.

\begin{corollary}\label{cor:NcBDG}
Let $\M$ be a finite von Neumann algebra with a normalized normal faithful trace $\tau,$ equipped with a filtration $( \M_n )_{n\ge 0}$ of von Neumann subalgebras of $\M.$ Let $\Phi$ be an Orlicz function, and let $x=( x_{n} )_{n\geq 0}$ be a noncommutative $L_{\Phi}$-martingale with respect to $( \M_n )_{n \ge 0}.$ If $1<p_{\Phi} \le q_{\Phi}<2,$ then
\beq\label{eq:NcPhiBDGle2}
\begin{split}
\tau \Big ( \Phi \Big [ {\sup_n}^+ x_n \Big ] \Big ) \approx \inf \bigg \{ \tau \Big ( \Phi \Big [ \Big ( \sum_{k= 0 }^{\infty} |dy_{k}|^{2}
\Big )^{ \frac{1}{2}} \Big ] \Big ) + \tau \Big ( \Phi \Big [ \Big ( \sum_{k= 0 }^{\infty} |dz_{k}^{*}|^{2} \Big )^{ \frac{1}{2}} \Big ] \Big ) \bigg \},
\end{split}
\eeq
where the infimum runs over all decomposition $x_n = y_n + z_n$ with $y_n$ in $\mathcal{H}^C_{\Phi}({\mathcal{M}})$ and $z_n$ in $\mathcal{H}^R_{\Phi}({\mathcal{M}});$ and if $2 <p_{\Phi} \le q_{\Phi}<\infty,$ then
\beq\label{eq:NcPhiBDGge2}
\begin{split}
\tau \Big ( \Phi  \Big [ {\sup_n}^+ x_n \Big ] \Big ) \approx \tau \Big ( \Phi \Big [ \Big ( \sum_{k= 0 }^{\infty} |dx_{k}|^{2} \Big )^{ \frac{1}{2}} \Big ] \Big ) + \tau \Big ( \Phi \Big [ \Big ( \sum_{k= 0 }^{\infty} |dx_{k}^{*}|^{2} \Big )^{ \frac{1}{2}} \Big ] \Big ).
\end{split}\eeq
\end{corollary}

\begin{remark}\label{rk:NcBDG}\rm
The classical case of Corollary \ref{cor:NcBDG} was originally proved by Burkholder, Davis, and Gundy in \cite{BDG1972} (see also \cite{Burk1973}). Note that, the classical case holds even if $p_\Phi =1$ (e.g. \cite{Davis1970}). However, the noncommutative case is surprisingly different. Indeed, it was shown in \cite[Corollary 14]{JX2005} that $\mathcal{H}_1 \not= \mathcal{H}^{\mathrm{max}}_1.$ This implies that \eqref{eq:NcPhiBDGle2} does not hold when $ \Phi (t) = t$ for which $p_\Phi =1.$
\end{remark}

\begin{proof}\; It is proved in \cite{BC2012} that if $1<p_{\Phi} \le q_{\Phi}<2,$ then
\beq\label{eq:NcPhiBGle2}
\begin{split}
\tau \left [ \Phi( |x| ) \right ] \thickapprox \inf \bigg \{ \tau \Big ( \Phi \Big [ \Big ( \sum_{k= 0 }^{\infty} |dy_{k}|^{2} \Big )^{ \frac{1}{2}} \Big ] \Big ) + \tau \Big ( \Phi \Big [ \Big ( \sum_{k= 0 }^{\infty} |dz_{k}^{*}|^{2} \Big )^{ \frac{1}{2}} \Big ] \Big ) \bigg \},
\end{split}
\eeq
where the infimum runs over all decomposition $x_n = y_n + z_n$ with $y_n$ in $\mathcal{H}^C_{\Phi}({\mathcal{M}})$ and $z_n$ in $\mathcal{H}^R_{\Phi}({\mathcal{M}});$ and if $2 <p_{\Phi} \le q_{\Phi}<\infty,$ then
\beq\label{eq:NcPhiBGge2}
\tau \left [ \Phi( |x| ) \right ] \thickapprox \tau \Big ( \Phi \Big [ \Big ( \sum_{k= 0 }^{\infty} |dx_{k}|^{2} \Big )^{ \frac{1}{2}} \Big ] \Big ) + \tau \Big ( \Phi \Big [ \Big ( \sum_{k= 0 }^{\infty} |dx_{k}^{*}|^{2} \Big )^{ \frac{1}{2}} \Big ] \Big ).
\eeq
An appeal to \eqref{eq:PhiDoob} yields the required inequalities \eqref{eq:NcPhiBDGle2} and \eqref{eq:NcPhiBDGge2}.
\end{proof}

\begin{remark}\label{re:NcPhiBG}\rm
We remark that there is a gap in the proof of \eqref{eq:NcPhiBGge2} in \cite{BC2012}, as pointed out to the first two named authors by Q. Xu. This was recently resolved by Dirksen and Ricard \cite{DR}.
\end{remark}

Now we turn to noncommutative maximal ergodic inequalities associated with convex functions. To state our results, we need some more notation.

Let $\M$ be a semifinite von Neumann algebra with a normal semifinite faithful trace $\tau,$ and let $L_p(\M)$ be the associated noncommutative $L_p$-space. Consider a linear map $T:\; \M \mapsto \M$ which may satisfy the
following conditions:
\begin{enumerate}[{\rm (I)}]

\item $T$ is a contraction on $\M,$ that is, $\| T x \| \le \|x\|$ for all $x \in \M.$

\item $T$ is positive, i.e., $Tx \ge 0$ if $x \ge 0.$

\item $\tau \circ T \le \tau,$ that is, $\tau (T x) \le \tau (x)$ for all $x \in L_1 (\M) \cap \M_+.$

\item $T$ is symmetric relative to $\tau,$ i.e., $\tau ( (Ty)^* x ) = \tau (y^* T x )$ for all $x, y \in L_2 (\M) \cap \M.$

\end{enumerate}
Under conditions (I)-(III), $T$ naturally extends to a contraction on $L_p ( \M )$ for every $1 \le p < \8.$
The extension will be still denoted by $T.$


\begin{theorem}\label{th:NcMaxErgodi}
Let $\Phi$ be an Orlicz function with $1 < p_{\Phi} \le q_{\Phi} < \8.$ If $T: \M \mapsto \M$ is a linear map satisfying $(I) - (III),$ then
\beq\label{eq:NcPhiDS}
\tau \Big ( \Phi \big [ {\sup_n}^+ M_n (x) \big ] \Big ) \lesssim \tau ( \Phi [ |x| ] ),\quad \forall x \in L_{\Phi} (\M),
\eeq
where $M_n : = \frac{1}{n+1} \sum^n_{k=0} T^k$ for any $n \ge 1.$ If, in addition, $T$ satisfies (IV), then
\beq\label{eq:NcPhiStein}
\tau \Big ( \Phi \big [ {\sup_n}^+ T^n (x) \big ] \Big ) \lesssim \tau ( \Phi [ |x| ] ),\quad \forall x \in L_{\Phi} (\M).
\eeq
\end{theorem}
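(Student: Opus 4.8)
The plan is to derive both \eqref{eq:NcPhiDS} and \eqref{eq:NcPhiStein} from the moment interpolation theorem, Theorem \ref{th:Inter}, by checking its two endpoint hypotheses for the appropriate sequence of maps. Note first that $\Phi \in \triangle_2$, since $q_\Phi < \8$, so Proposition \ref{prop:BasicPhiMax} and the elementary $\triangle_2$ estimates of Section \ref{pre} are available. The first step is a reduction to the positive case: writing $x$ as a linear combination of four operators in $L^+_\Phi(\M)$, the linearity of $M_n$ (resp. $T^n$), part (2) of Proposition \ref{prop:BasicPhiMax}, the elementary invariance of $\tau[\Phi({\sup_n}^+\,\cdot\,)]$ under multiplication by a unimodular scalar (immediate from Definition \ref{df:PhiMaxOper}), and the operator triangle inequality \eqref{eq:OperatorModuleTriangleInequa} combined with the $\triangle_2$-condition reduce both inequalities to $x \in L^+_\Phi(\M)$.

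For \eqref{eq:NcPhiDS}, set $S_n = M_n = \frac{1}{n+1}\sum_{k=0}^n T^k$. Each $T^k$ is a positive linear map extending to $L_1(\M) + \M$, so $S = (S_n)$ maps $L^+_1(\M) + L^+_\8(\M)$ into $L^+_0(\M)$ and, being additive, is subadditive. Since $T$ is a contraction on $\M$, so is each $M_n$; hence $M_n(x) \le \|x\|_\8\,1$ for $x \ge 0$, i.e. $S$ is of type $(\8,\8)$ with constant $1$. The noncommutative weak type $(1,1)$ maximal ergodic inequality of Junge and Xu \cite{JX2007} asserts precisely that $S$ is of weak type $(1,1)$. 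As $1 < p_\Phi \le q_\Phi < \8$, Theorem \ref{th:Inter} with $p = 1$ applies and gives $\tau[\Phi({\sup_n}^+ M_n(x))] \lesssim \tau[\Phi(x)]$ for $x \in L^+_\Phi(\M)$, whence \eqref{eq:NcPhiDS} by the reduction above.

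For \eqref{eq:NcPhiStein}, take $S_n = T^n$; once more $S$ is an additive sequence of positive maps from $L^+_1(\M) + L^+_\8(\M)$ to $L^+_0(\M)$, of type $(\8,\8)$ with constant $1$ because $\|T^n x\|_\8 \le \|x\|_\8$. Fix $p_0 \in (1, p_\Phi)$. Junge and Xu's noncommutative Stein maximal ergodic inequality \cite{JX2007} for symmetric positive contractions provides, for $x \in L^+_{p_0}(\M)$, an element $a \in L^+_{p_0}(\M)$ with $\|a\|_{p_0} \le C_{p_0}\|x\|_{p_0}$ and $T^n(x) \le a$ for all $n$; then for any $\lambda > 0$ the spectral projection $e = e_{[0,\lambda]}(a)$ satisfies $e\,T^n(x)\,e \le eae \le \lambda$ and, by Kolmogorov's inequality \eqref{eq:kol}, $\tau(e^\perp) = \lambda_\lambda(a) \le (C_{p_0}\|x\|_{p_0}/\lambda)^{p_0}$, so $S$ is of weak type $(p_0,p_0)$. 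Theorem \ref{th:Inter} with $p = p_0 < p_\Phi \le q_\Phi < \8$ then yields \eqref{eq:NcPhiStein} after the reduction to positive $x$. (In the semifinite framework one appeals to the extension recorded in Remark \ref{re:SemifiniteMart}.)

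I do not anticipate a serious obstacle: once Theorem \ref{th:Inter} is in hand the argument is largely bookkeeping, the substantive inputs being the two classical endpoint estimates of \cite{JX2007}. The one point worth isolating is the implication ``strong type $(p_0,p_0)$ $\Rightarrow$ weak type $(p_0,p_0)$'' used for the Stein part, which is immediate from Kolmogorov's inequality \eqref{eq:kol} applied to the majorant; and one should check that the reduction to positive operators is compatible with the functional $\tau[\Phi({\sup_n}^+\,\cdot\,)]$, which is exactly what Proposition \ref{prop:BasicPhiMax}(2) together with the $\triangle_2$-condition provides.
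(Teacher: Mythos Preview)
Your proposal is correct and follows essentially the same route as the paper: reduce to positive $x$, then apply Theorem~\ref{th:Inter} to $S=(M_n)$ using the weak type $(1,1)$ ergodic maximal inequality together with the trivial type $(\infty,\infty)$ bound, and to $S=(T^n)$ using the Junge--Xu strong type $(p,p)$ Stein inequality (which you correctly downgrade to weak type via Kolmogorov) together with type $(\infty,\infty)$. The only cosmetic discrepancy is the attribution of the weak type $(1,1)$ inequality for the ergodic averages: the paper credits Yeadon \cite{Yeadon1977} rather than \cite{JX2007}, and you may wish to adjust your citation accordingly.
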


The inequalities \eqref{eq:NcPhiDS} and \eqref{eq:NcPhiStein} are the noncommutative forms of the classical Dunford-Schwartz and Stein maximal ergodic inequality for a convex function of positive and symmetric positive contractions. These generalize the noncommutative Dunford-Schwartz and Stein maximal ergodic inequalities of Junge and Xu \cite{JX2007} in the $L^p$ case to the case of convex functions. The proofs of \eqref{eq:NcPhiDS} and \eqref{eq:NcPhiStein} are again based on Theorem \ref{th:Inter}.

\begin{proof}\; Decomposing an operator into a linear combination of four positive ones, we can assume $x \in L^+_{\Phi} (\M).$ Let $S = (M_n).$ Each $M_n$ is considered to be a map on $L^+_1 (\M) + L^+_{\8} (\M),$ positive and additive (and so subadditive too). Yeadon's weak type $(1,1)$ maximal ergodic inequality in \cite{Yeadon1977} says that $S$ is of weak type $(1,1).$ Also, $S$ is evidently of type $(\8, \8).$ Then, we deduce \eqref{eq:NcPhiDS} from Theorem \ref{th:Inter}.

On the other hand, let $S = (T^n).$ Then $S$ is additive and so subadditive. By \cite[Theorem 5.1]{JX2007}, $S$ is of type $(p, p)$ for every $1 < p \le \8.$ An appeal to Theorem \ref{th:Inter} immediately yields \eqref{eq:NcPhiStein}.
\end{proof}

\

Let us present two examples illustrating applications of the inequalities obtained above.

\begin{example}\label{ex:pleq}\rm
Let $\Phi (t) = t^a \ln (1 + t^b)$ with $a > 1$ and $b >0.$ It is easy to check that $\Phi$ is an Orlicz function and
\be
p_{\Phi} = a\quad \text{and}\quad q_{\Phi} = a + b.
\ee
Thus, both Theorems \ref{th:PhiDoob} and \ref{th:NcMaxErgodi} can be applied to this function. Furthermore, if $1< a < a+b<2,$ then \eqref{eq:NcPhiBDGle2} holds true; if $a > 2,$ then \eqref{eq:NcPhiBDGge2} is valid. Unfortunately, when $1<a \le 2 \le a + b,$ then Corollary \ref{cor:NcBDG} gives no information.
\end{example}

\begin{example}\label{ex:p=q=2}\rm
Let $\Phi (t) = t^p (1 + c \sin(p \ln t))$ with $p > 1/(1-2c)$ and $0< c <1/2.$ Then $\Phi$ is an Orlicz function and
\be
p_{\Phi} = q_{\Phi} = p.
\ee
Therefore, Theorems \ref{th:PhiDoob} and \ref{th:NcMaxErgodi} can be applied to this function, and so does Corollary \ref{cor:NcBDG} except the case $p=2.$
\end{example}

\section{Weak type maximal inequalities}\label{Ex}


All the results continue to hold if we replace the noncommutative maximal operator $\tau [ \Phi ({\sup_n}^+ x_n) ]$ associated with a convex function by a certain weak maximal operator. The required modifications are not difficult and left to the interested reader. However, for
the sake of convenience, we write the corresponding definitions and results, and some main points of Theorem \ref{th:InterW}. We refer to \cite{BCLJ2011} for noncommutative weak Orlicz spaces and for the terminology used here.

Let $\Phi$ be an Orlicz function. For $x \in L^w_{\Phi} (\M),$ we set
\be
\| x \|_{\Phi, \8} = \sup_{t >0} t \Phi \big [ \mu_t (x) \big ].
\ee
When $\Phi (t) = t^p,$ $\| x \|_{\Phi, \8}$ is just the usual weak $L_p$-norm $\| x \|_{p, \8}.$

The following is the definition of a weak type maximal operator associated with a convex function.

\begin{definition}\label{df:PhiMaxOperW}
Let $(x_n)$ be a sequence in $L^w_{\Phi} (\M).$ We define $\| {\sup_n}^+ x_n \|_{\Phi, \8}$ by
\beq\label{eq:PhiMaxSequ}
\big \| {\sup_n}^+ x_n \big \|_{\Phi, \8} : = \inf \left \{ \frac{1}{2} \Big ( \big \| |a|^2 \big \|_{\Phi, \8} +  \big \| |b|^2 \big \|_{\Phi, \8} \Big ) \sup_n \| y_n \|_{\8} \right \}
\eeq
where the infimum is taken over all decompositions $x_n = a y_n b$ for $a, b \in L_0 (\M)$ and $(y_n) \subset L_{\8} (\M)$ with $|a|^2, |b|^2 \in L^w_{\Phi} (\M),$ and $\| y_n \|_{\8} \le 1$ for all $n.$
\end{definition}

We have the noncommutative Marcinkiewicz type interpolation theorem for the weak type maximal operator associated with a convex function as follows. To this end, recall that
\be
a_{\Phi} = \inf_{t>0} \frac{t \Phi'(t)}{\Phi (t)}\quad \text{and} \quad b_{\Phi} = \sup_{t>0} \frac{t \Phi'(t)}{\Phi (t)}.
\ee
Note that $1 \le a_\Phi \le p_\Phi \le q_\Phi \le b_\Phi,$ but they do not coincide in general (see \cite{M1985, M1989} for details).

\begin{theorem}\label{th:InterW}
Suppose $1 \le p_{0}<p_{1} \leq \infty.$ Let $S = (S_n)_{n \ge 0}$ be a sequence of maps from $L^+_{p_0}(\mathcal{M}) + L^+_{p_1}(\mathcal{M}) \mapsto L^+_{0}(\mathcal{M}).$ Assume that $S$ is subadditive. If $S$ is of weak type $(p_0, p_0)$ with constant $C_0$ and of type $(p_1, p_1)$ with constant $C_1,$ then for an Orlicz function $\Phi$ with $p_{0}<a_{\Phi}\le b_{\Phi}<p_{1},$ there exists a positive constant $C$ depending only on $p_0, \; p_1, C_0, C_1$ and $\Phi,$ such that
\begin{equation}\label{eq:PhiMaxOperW}
\big \| {\sup_n}^+ S_n( x) \big \|_{\Phi, \8} \leq C \left \| x \right \|_{\Phi, \8},
\end{equation}
for all $x \in L^w_{\Phi}(\mathcal{M})_+.$
\end{theorem}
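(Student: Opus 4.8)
The plan is to run the proof of Theorem~\ref{th:Inter} essentially verbatim, the one genuine modification being that the trivial $L_\8$ estimate used there---which is precisely the degenerate case $p_1=\8$ of the type hypothesis---is replaced by the type $(p_1,p_1)$ assumption, at the price of a two-sided singular-value profile in place of the one-sided $\min\{u^{-\delta},1\}$. First I would reduce, exactly as in the second half of the proof of Theorem~\ref{th:PhiDoob}, to constructing a single majorant of the sequence $S(x)=(S_n(x))_{n\ge 1}$: if $a\in L^w_\Phi(\M)_+$ satisfies $S_n(x)\le a$ for all $n$, then (as recalled right after \eqref{eq:PhiMaxSequPosiGe}) there are contractions $u_n$ with $S_n(x)=a^{1/2}(u_n^*u_n)a^{1/2}$, i.e.\ $S_n(x)=a^{1/2}y_na^{1/2}$ with $\sup_n\|y_n\|_\8\le 1$; feeding this into Definition~\ref{df:PhiMaxOperW} yields $\|{\sup_n}^+S_n(x)\|_{\Phi,\8}\le\||a^{1/2}|^2\|_{\Phi,\8}=\|a\|_{\Phi,\8}$. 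So it suffices to build a majorant $a$ of $S(x)$ with $\|a\|_{\Phi,\8}\le C\|x\|_{\Phi,\8}$.

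Next I would fix $\delta$ with $1/a_\Phi<\delta<1/p_0$ (possible since $p_0<a_\Phi$; then $1/p_1<1/b_\Phi\le 1/a_\Phi<\delta<1$, which is what the block construction will need) and put $h(u)=\min\{u^{-\delta},\,u^{-1/p_1}\}$ (which equals $\min\{u^{-\delta},1\}$ when $p_1=\8$). Writing $e_i=E_{(2^i,\8)}(x)$ and $\tilde x=\sum_{i\in\ent}2^{i+1}E_{(2^i,2^{i+1}]}(x)=\sum_{i\in\ent}2^ie_i$, one has $x\le\tilde x\le 2x$, and $\|x\|_{\Phi,\8}<\8$ forces $\tau(e_i)=\lambda_{2^i}(x)\le\|x\|_{\Phi,\8}/\Phi(2^i)<\8$, so each $e_i$ is a finite projection lying in $L_{p_0}(\M)\cap L_{p_1}(\M)$ with $\|e_i\|_{p_0}=\tau(e_i)^{1/p_0}$, $\|e_i\|_{p_1}=\tau(e_i)^{1/p_1}$. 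For each $i$ I would then run the block construction from the proof of Theorem~\ref{th:Inter}: weak type $(p_0,p_0)$ applied at the levels $2^j$, together with $q_k=\bigwedge_{j\ge k}q^{(2^j)}$ and $p_k=q_k-q_{k-1}$, gives $q_kS_n(e_i)q_k\le 2^kq_k$ and, by \eqref{eq:weak(p,p)-inequ}, $\tau(1-q_k)\le\frac{C_0^{p_0}}{1-2^{-p_0}}\tau(e_i)2^{-kp_0}$; type $(p_1,p_1)$ applied to $e_i$ gives a majorant $b^{(i)}\ge S_n(e_i)$ ($\forall n$) with $\|b^{(i)}\|_{p_1}\le C_1\tau(e_i)^{1/p_1}$, whence $\mu_t(b^{(i)})\le C_1(\tau(e_i)/t)^{1/p_1}$ by \eqref{eq:kol}. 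Combining via $S_n(e_i)\le 2q_NS_n(e_i)q_N+2(1-q_N)b^{(i)}(1-q_N)$ and the majorant \eqref{eq:majorant} with weights $\alpha_k=h(2^{(N-k)p_0})$ and an ($i$-independent) cutoff $N$ chosen as in the proof of Theorem~\ref{th:Inter} but with $C_\8$ replaced by $C_1$, I expect to obtain a majorant $a_i$ of $S(e_i)$ satisfying, for all $t>0$,
\[
\mu_t(a_i)\le C\,h\Big(\frac{t}{\tau(e_i)}\Big),\qquad \tilde\mu_t(a_i)\le C\,h\Big(\frac{t}{\tau(e_i)}\Big),
\]
with $C$ depending only on $p_0,p_1,C_0,C_1,\delta$: the exact analogue of \eqref{eq:majorantSingularValueControl}, collapsing to it when $p_1=\8$.

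Finally I would set $a=\sum_{i\in\ent}2^ia_i$, so that $S_n(x)\le S_n(\tilde x)\le a$ for all $n$ as in the proof of Theorem~\ref{th:Inter}, and then argue exactly as there: by sublinearity of $x\mapsto\tilde\mu(x)$, the identity $\mu_t(\tilde x)=\sum_i2^i\chi_{(0,\tau(e_i)]}(t)$, and $h(u)=\int_u^\8(-h'(s))\,ds$, one dominates $\mu_t(a)\le\tilde\mu_t(a)\le C\,(T\tilde x)(t)$ for the non-increasing function $(Tf)(t)=\int_0^\8\tilde\mu_{t/s}(f)(-h'(s))\,ds$, whence (using $\Phi\in\triangle_2$, since $b_\Phi<p_1<\8$) $\|a\|_{\Phi,\8}\lesssim\|T\tilde x\|_{L^w_\Phi(0,\8)}$. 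The classical Hardy--Littlewood maximal inequality then shows $T$ is bounded on $L_q(0,\8)$ for every $q$ with $1/\delta<q<p_1$ (exactly the range in which $\int_0^\8 s^{1/q}(-h'(s))\,ds<\8$), hence $T$ is of strong type $(q_0,q_0)$ and $(q_1,q_1)$ for some $p_0<q_0<a_\Phi\le b_\Phi<q_1<p_1$; the weak-Orlicz Marcinkiewicz interpolation theorem---the weak analogue of \cite[Theorem~2.1]{BC2012}, available from \cite{BCLJ2011}---then gives $\|T\tilde x\|_{L^w_\Phi(0,\8)}\le C\|\tilde x\|_{\Phi,\8}\le 2C\|x\|_{\Phi,\8}$, which is \eqref{eq:PhiMaxOperW}.

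The step I expect to be the main obstacle is the block construction: manufacturing, for each dyadic piece $e_i$, one operator $a_i$ dominating the whole sequence $(S_n(e_i))_n$ whose generalized singular numbers obey the two-sided bound $\mu_t(a_i)\lesssim h(t/\tau(e_i))$ \emph{uniformly} in $i$. In Theorem~\ref{th:Inter} the ``upper'' half of the profile came for free from $\|S_n(e_i)\|_\8\le C_\8$; for finite $p_1$ one must instead thread the type $(p_1,p_1)$ majorant $b^{(i)}$ through the same combinatorial construction (the choice of $N$ and of the weights $\alpha_k$) and check that all estimates stay clean and $i$-uniform with constants depending only on $p_0,p_1$ and the indices of $\Phi$. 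A secondary point---and the reason finite $p_1$ is admissible here while it remains open for the moment version of Theorem~\ref{th:Inter}---is that the relevant commutative interpolation theorem is the \emph{weak}-Orlicz one, which does allow two finite endpoints; this is the input borrowed from \cite{BCLJ2011}.
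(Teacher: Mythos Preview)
Your proposal is plausible and, if the block-construction step can be made to yield the two-sided profile $\mu_t(a_i)\lesssim h(t/\tau(e_i))$ uniformly in $i$, it would give a self-contained proof. But the paper's own argument is quite different and considerably shorter, and it sidesteps precisely the obstacle you single out.

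Instead of re-running the block construction with a modified profile, the paper first \emph{replaces the endpoints} $p_0,p_1$ by intermediate exponents $p_0<p_0'<a_\Phi\le b_\Phi<p_1'<p_1$. A slight modification of \cite[Theorem~3.1]{JX2007} (or the paper's own Theorem~\ref{th:InterSymspace} applied to the symmetric spaces $L_{p_i',\infty}$) already gives, for each $x_i\in L_{p_i'}^+(\M)$, a majorant $a_i\ge S_n(x_i)$ with $\|a_i\|_{p_i',\infty}\le C\|x_i\|_{p_i',\infty}$. With these two weak-$L_{p_i'}$ majorant estimates in hand, one takes $x\in L_\Phi^w(\M)_+$ and, for each level $\alpha>0$, splits $x=x_0^\alpha+x_1^\alpha$ with $x_0^\alpha=x\,e_{(\alpha,\infty)}(x)$; the remainder is then exactly the proof of \cite[Theorem~4.2]{BCLJ2011}. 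In other words, the dyadic block machine from Theorem~\ref{th:Inter} is never invoked for finite $p_1$: the finite-$p_1$ difficulty is absorbed into the existing Junge--Xu interpolation theorem at the level of $L_p$-spaces, and only a single spectral cut (not a full dyadic decomposition) is needed to pass to weak Orlicz.

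What the two approaches buy: the paper's route is modular and short, delegating all the hard work to \cite{JX2007} and \cite{BCLJ2011}. Your route is more constructive and would make transparent \emph{why} finite $p_1$ is accessible in the weak setting while it remains open for the moment version (Remark~\ref{re:Inter}(2)): in your scheme the block construction itself extends to finite $p_1$, and the only place the argument differs from Theorem~\ref{th:Inter} is the final commutative interpolation of $T$, which for weak Orlicz admits two finite endpoints via \cite{BCLJ2011}. The cost is that the ``main obstacle'' you identify---getting the $i$-uniform profile $\mu_t(a_i)\lesssim\min\{(t/\tau(e_i))^{-\delta},(t/\tau(e_i))^{-1/p_1}\}$ with a correctly chosen cutoff $N$---is a genuine technical computation that the paper simply avoids; in particular, the choice ``$C_\infty$ replaced by $C_1$'' you suggest for $N$ is not quite the right calibration, since the top piece $(1-q_N)b^{(i)}(1-q_N)$ now has an unbounded singular-value tail rather than a flat plateau, and the matching condition between the block piece and the top piece at $t\approx\tau(1-q_N)$ changes accordingly. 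This is fixable (any fixed $N$ of the right order works, and the constants can be tracked), but it is extra work the paper does not need.
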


\begin{proof}
We give the main point of the proof. Indeed, modifying slightly the proof of \cite[Theorem 3.1]{JX2007} we conclude that for $p_0< p_0' < a_{\Phi} \le b_{\Phi}< p_1' < p_1 \le \8,$
\be
\big \| {\sup_n}^+ S_n( x) \big \|_{p'_i, \8} \leq C_{p'_i} \left \| x \right \|_{p'_i, \8},\quad i =0,1,
\ee
that is, for each $x_i \in L^+_{p'_i} (\M)$ there exists $a_i \in L^+_{p'_i} (\M)$ such that
\beq\label{eq:p-MaxOperW}
\left \| a_i \right \|_{p'_i, \8} \leq C \left \| x_i \right \|_{p'_i, \8}\quad \text{and} \quad S_n (x_i) \le a_i,\quad \forall n \ge 1.
\eeq
(This can be also obtained by Theorem \ref{th:InterSymspace} above.)

Now, take $x \in L^w_{\Phi} (\M)_+.$ For any $\alpha > 0$ let $x = x^{\alpha}_0 + x^{\alpha}_1,$ where $x^{\alpha}_0 = x e_{(\alpha, \8)}(x).$ By \eqref{eq:p-MaxOperW}, for $x^{\alpha}_i$ there exists a corresponding $a_i$ ($i=0,1$). The remainder of the proof is the same as that of \cite[Theorem 4.2]{BCLJ2011}.
\end{proof}

The following is a noncommutative Doob weak type inequality associated with a convex function.

\begin{theorem}\label{th:PhiDoobW}
Let $\M$ be a finite von Neumann algebra with a normalized normal faithful trace $\tau,$ equipped with a filtration $({\mathcal{M}}_{n})$ of von Neumann subalgebras of ${\mathcal{M}}.$ Let $\Phi$ be an Orlicz function and let $x = ( x_n )$ be a noncommutative $L^w_{\Phi}$-martingale with respect to $({\mathcal{M}}_{n}).$ If $1 < a_{\Phi} \le b_{\Phi}< \8,$ then
\beq\label{eq:PhiDoobW}
\big \| {\sup_n}^+ x_n \big \|_{\Phi, \8} \thickapprox \| x \|_{\Phi, \8}.
\eeq
\end{theorem}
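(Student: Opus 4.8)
The plan is to obtain \eqref{eq:PhiDoobW} from the interpolation theorem \ref{th:InterW}, running the same two-step argument as in the proof of Theorem \ref{th:PhiDoob} with the weak functional $\|\cdot\|_{\Phi,\8}$ replacing $\tau[\Phi(\cdot)]$. First I would reduce to a positive martingale: writing a general $x$ as a linear combination of four positive operators and using the $\triangle_2$-quasi-triangle inequality for $\|\cdot\|_{\Phi,\8}$ together with the weak analogue of Proposition \ref{prop:BasicPhiMax}(2) (both available since $b_\Phi<\8$), it suffices to treat the case $x_n\ge 0$ for all $n$. As in the closed-martingale discussion preceding Theorem \ref{th:PhiDoob}, the hypothesis $1<a_\Phi\le b_\Phi<\8$ guarantees (cf. \cite{BCLJ2011}) that such a martingale is closed, so I may write $x=x_\8\in L^w_\Phi(\M)_+$ with $x_n=\mathcal{E}_n(x)$.

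For $\|{\sup_n}^+ x_n\|_{\Phi,\8}\lesssim\|x\|_{\Phi,\8}$ I would take $S=(\mathcal{E}_n)$ and check the hypotheses of Theorem \ref{th:InterW} with $p_0=1$, $p_1=\8$: the $\mathcal{E}_n$ are positive and additive on $L_1^+(\M)+\M_+$, so $S$ is subadditive; $S$ is of weak type $(1,1)$ by Cuculescu's weak type $(1,1)$ maximal inequality \cite{Cucu1971}; and $S$ is trivially of type $(\8,\8)$, since $\mathcal{E}_n(x)\le\|x\|_\8\,1$ for $x\in\M_+$ (this is Junge's Doob inequality at $p=\8$). As $1<a_\Phi\le b_\Phi<\8$, Theorem \ref{th:InterW} applied to $x=x_\8$ gives $\|{\sup_n}^+\mathcal{E}_n(x)\|_{\Phi,\8}\le C\|x\|_{\Phi,\8}$, which is the asserted bound.

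For the reverse inequality I would reproduce the computation from the second half of the proof of Theorem \ref{th:PhiDoob}. Fix any decomposition $x_n=ay_nb$ as in Definition \ref{df:PhiMaxOperW} and put $\lambda=\sup_n\|y_n\|_\8\le 1$; submultiplicativity of generalized singular numbers gives $\mu_{2t}(x_n)\le\|y_n\|_\8\,\mu_t(|a|)\mu_t(|b|)\le\lambda\,\tfrac12\big(\mu_t(|a|^2)+\mu_t(|b|^2)\big)$, and inserting this into $\Phi$ (convex, increasing, $\Phi(0)=0$) and taking the supremum over $t>0$ yields $\|x_n\|_{\Phi,\8}\le 2\lambda\cdot\tfrac12\big(\| |a|^2 \|_{\Phi,\8}+\| |b|^2 \|_{\Phi,\8}\big)$, uniformly in $n$. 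Since $x_n\to x$ in $L_1(\M)$, hence in measure (note $L^w_\Phi(\M)\subset L_1(\M)$ as $a_\Phi>1$), and each $\mu_t$ is lower semicontinuous for convergence in measure while $\Phi$ is continuous, the same bound passes to $x$ in place of $x_n$; taking the infimum over decompositions gives $\|x\|_{\Phi,\8}\le 2\|{\sup_n}^+ x_n\|_{\Phi,\8}$. The only input beyond what is already proved is Theorem \ref{th:InterW}, so I do not expect a genuine obstacle; the delicate points are the two auxiliary facts used in the reduction to positive martingales (the $\triangle_2$-quasi-triangle inequalities for $\|\cdot\|_{\Phi,\8}$ and its $\el^{\8}$-version, established exactly as the corresponding statements for $\tau[\Phi(\cdot)]$ in Section \ref{pre} and Proposition \ref{prop:BasicPhiMax}) and the passage from $x_n$ to $x_\8$ in the converse.
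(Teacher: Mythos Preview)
Your proposal is correct and follows precisely the approach the paper intends: the proof of Theorem~\ref{th:PhiDoobW} is not written out in the paper (the authors say the ``required modifications are not difficult and left to the interested reader''), and your argument is exactly the expected transcription of the proof of Theorem~\ref{th:PhiDoob}, with Theorem~\ref{th:InterW} replacing Theorem~\ref{th:Inter} and the weak functional $\|\cdot\|_{\Phi,\infty}$ replacing $\tau[\Phi(\cdot)]$. The only place where you have to do a little more work than the paper's proof of Theorem~\ref{th:PhiDoob} is the passage from the uniform bound on $\|x_n\|_{\Phi,\infty}$ to the same bound on $\|x_\infty\|_{\Phi,\infty}$, which you handle correctly via lower semicontinuity of $\mu_t$ under convergence in measure.
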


Combining this with \cite[Theorem 5.8]{BCLJ2011} and the associated result in \cite{DR} yields the noncommutative Burkholder-Davis-Gundy weak type inequality associated with a convex function as follows.

\begin{corollary}\label{th:NcBDGW}
Let $\M$ be a finite von Neumann algebra with a normalized normal faithful trace $\tau,$ equipped with a filtration $({\mathcal{M}}_{n})$ of von Neumann subalgebras of $\M.$ Let $\Phi$ be an Orlicz function and let $x=( x_{n} )_{n\geq 0}$ be a noncommutative $L_{\Phi}$-martingale with respect to $( \M_n )_{n \ge 0}.$ If $1<a_{\Phi} \le b_{\Phi}<2,$ then
\beq\label{eq:NcPhiBDGWle2}
\begin{split}
\big \| {\sup_n}^+ x_n \big \|_{\Phi, \8} \approx \inf \left \{ \bigg \| \bigg ( \sum_{k= 0 }^{\infty} |dy_{k}|^{2} \bigg )^{\frac{1}{2}}
\bigg \|_{\Phi, \8} + \bigg \| \bigg ( \sum_{k= 0 }^{\infty} |dz_{k}^{*}|^{2} \bigg )^{\frac{1}{2}} \bigg \|_{\Phi, \8} \right \},
\end{split}
\eeq
where the infimum runs over all decomposition $x_n = y_n + z_n$ with $(y_n)$ in $L^w_{\Phi}(\mathcal{M}; \el^2_C)$ and $(z_n)$ in $L^w_{\Phi}(\mathcal{M}; \el^2_R);$ and if $2 <a_{\Phi} \le b_{\Phi}<\infty,$ then
\beq\label{eq:NcPhiBDGWge2}
\big \| {\sup_n}^+ x_n \big \|_{\Phi, \8} \approx \bigg \| \bigg ( \sum_{k= 0 }^{\infty} |dx_{k}|^{2} \bigg )^{\frac{1}{2}} \bigg \|_{\Phi, \8} + \bigg \| \bigg ( \sum_{k= 0 }^{\infty} |dx_{k}^{*}|^{2} \bigg )^{\frac{1}{2}} \bigg \|_{\Phi, \8}.
\eeq
\end{corollary}

The weak type analogue of Theorem \ref{th:NcMaxErgodi} concerning maximal ergodic inequalities associated with a convex function is similar and omitted.

\subsection*{Acknowledgement} We are grateful to Prof. Xu for useful suggestions on this paper. We also thank the anonymous referee for making helpful comments and suggestions, which have been incorporated into this version of the paper. T.N. Bekjan is partially supported by NSFC grant No.11371304. Z. Chen is partially supported by NSFC grant No.11171338 and No.11431011. A. Os\c{e}kowski is supported in part by MNiSW Grant N N201 364436.

\end{document}